\documentclass{amsproc}
\usepackage{ytableau,tikz,hyperref,amsaddr,biblatex}
\usepackage[shortlabels]{enumitem}

\addbibresource{refs.bib}
\DeclareFieldFormat{doi}{\href{https://dx.doi.org/#1}{\textsc{doi}}}
\DeclareFieldFormat{url}{\href{#1}{\textsc{url}}}
\renewbibmacro*{doi+eprint+url}{
  \printfield{doi}
  \newunit\newblock
  \iftoggle{bbx:eprint}{
    \usebibmacro{eprint}
  }{}
  \newunit\newblock
  \iffieldundef{doi}{
    \usebibmacro{url+urldate}}
  {}
}
\ExecuteBibliographyOptions{isbn=false,giveninits=true}
\renewbibmacro{in:}{}
\AtEveryBibitem{
  \ifentrytype{book}
  {\clearfield{pages}}
}

\newenvironment{customproblem}[1]
{\innercustomproblem}
{\endinnercustomproblem}

\newenvironment{customtheorem}[1]
{\innercustomtheorem}
{\endinnercustomtheorem}

\newtheorem{theorem}{Theorem}[section]
\newtheorem{lemma}[theorem]{Lemma}

\newtheorem{corollary}[theorem]{Corollary}

\theoremstyle{definition}
\newtheorem{definition}[theorem]{Definition}

\theoremstyle{remark}
\newtheorem{remark}[theorem]{Remark}
\newtheorem{example}[theorem]{Example}
\newtheorem*{notation*}{Notation}

\DeclareMathOperator{\Ind}{Ind}

\DeclareMathOperator{\Res}{Res}
\DeclareMathOperator{\SYT}{SYT}
\DeclareMathOperator{\sgn}{sgn}

\newcommand{\legendre}[2]{\left(\frac{#1}{#2}\right)}
\newcommand{\tlegendre}[2]{(\tfrac{#1}{#2})}
\newcommand{\cyc}[1]{\langle#1 \rangle}

\begin{document}

\title{Cyclic Characters of Alternating Groups}
\author[]{Amrutha P}\address{Chennai Mathematical Institute, Siruseri}\email{amruthap@cmi.ac.in, amri@imsc.res.in, velmurugan@imsc.res.in}
\author[Amrutha P, Amritanshu Prasad and Velmurugan S]{Amritanshu Prasad and Velmurugan S}\address{The Institute of Mathematical Sciences, Chennai}\address{Homi Bhabha National Institute, Mumbai}
\date{\today}
\subjclass{20C15, 20D06, 20E45}
\keywords{Cyclic characters, alternating groups, invariant vectors, global conjugacy classes, unisingular representations}
\thanks{Amrutha P. was partially supported by a grant from the Infosys Foundation.}
\begin{abstract}
    We determine the eigenvalues with multiplicity of each element of an alternating group in any irreducible representation.
    This is equivalent to determining the decomposition of cyclic representations of alternating groups into irreducibles.
    We characterize pairs $(w, V)$, where $w$ is an element and $V$ is an irreducible representation of an alternating group such that $w$ admits a non-zero invariant vector in $V$.
    We also establish large new families of global conjugacy classes for alternating groups, thereby giving a new proof of a result of Heide and Zalessky on the existence of such classes.
\end{abstract}
    
\maketitle

\section{Introduction}\label{sec:intro}

In this introduction we first formulate four fundamental problems on eigenvalues of elements in representations of finite groups.
We then discuss (Sections~\ref{sec:unisingular}--\ref{sec:minimal}) the connection between these problems and some other interesting  problems in representation theory.
In Section~\ref{sec:main-results}, we state the main results of this article for alternating groups.
In Section~\ref{sec:global-intro}, we discuss the connection of our results to the determination of global conjugacy classes.
\subsection{Eigenvalues in Representations}\label{sec:problems}
Let $G$ be a finite group.
If $g\in G$ has order $m$, the eigenvalues of $g$ in any complex representation $V$ of $G$ are $m$th roots of unity.
We may ask for the multiplicity of each $m$th root of unity as an eigenvalue of $g$ in $V$.
The answer depends on $V$ only through its isomorphism class and on $g$ only through its conjugacy class.
Let $\zeta_m$ denote a primitive $m$th root of unity.
\begin{customproblem}{A}\label{problem:multiplicity-problem}
    Let $G$ be a finite group.
    Given $g\in G$ of order $m$, an irreducible representation $V$ of $G$, and an integer $i$, determine the multiplicity of $\zeta_m^i$ as an eigenvalue of $g$ in $V$. 
\end{customproblem}
In principle, the multiplicity of $\zeta_m^i$ as an eigenvalue of $g$ in $V$ can be computed using the character $\chi_V$ of $V$ as
\begin{equation}\label{eq:multiplicity}
    \frac 1m \sum_{j=0}^{m-1} \chi_V(g^j) \zeta_m^{-ij},
\end{equation}
but a good answer to Problem~\ref{problem:multiplicity-problem} should be a combinatorial interpretation, or a simple formula that works for an infinite family of groups.
\begin{customproblem}{B}\label{problem:positivity-problem}
    Determine all triples $(g,V,i)$ as in Problem~\ref{problem:multiplicity-problem} for which the multiplicity of $\zeta_m^i$ as an eigenvalue $g$ in $V$ is positive.
\end{customproblem}
As we shall see (in the context of symmetric groups) in Section~\ref{sec:symmetric-groups}, a solution to Problem~\ref{problem:multiplicity-problem} may not provide an efficient solution to Problem~\ref{problem:positivity-problem}.
A particularly interesting case of Problem~\ref{problem:positivity-problem} is when $i=0$:
\begin{customproblem}{C}\label{problem:locally-invariant-vectors}
    Determine all pairs $(g,V)$ as in Problem~\ref{problem:multiplicity-problem} such that $g$ admits a non-zero invariant vector in $V$.
\end{customproblem}
\begin{customproblem}{D}\label{problem:unisingular-representations}
    Find all representations $V$ of $G$ such that for all $g\in G$, there exists a non-zero vector $v\in V$ such that $g\cdot v = v$.
\end{customproblem}

\subsection{Unisingularity and Immersion}\label{sec:unisingular}
The term \emph{unisingular} was introduced by Babai and Shalev~\cite[Definition~2.2]{MR2279239}.
They defined a finite group $G$ to be \emph{unisingular in characteristic $p$} if every element of $G$ admits a non-zero invariant vector in every irreducible representation in characteristic $p$.
They showed that a unisingular group of Lie type in characteristic $p$ can be recognized in Monte Carlo polynomial time among all black box groups of characteristic $p$.
Guralnick and Tiep~\cite{MR2279239} classified the finite simple groups of Lie type of characteristic $p$ that are unisingular in characteristic $p$.

Cullinan and Zalessky~\cite{MR4340949} called a representation $V$ of a group $G$ unisingular if every $g\in G$ admits a non-zero invariant vector in $V$.
Thus, Problem~\ref{problem:unisingular-representations} asks for a classification of unisingular representations of $G$.
In~\cite{fpsac2024,elementwise} we showed that, except for four infinite families and five exceptional cases, all irreducible complex representations of symmetric groups are unisingular (see Theorem~\ref{theorem:elementwise-sn}).
Some examples of unisingular representations of symmetric and alternating groups were identified independently by Cullinan~\cite{Cullinan:2023:0092-7872:5297,cullinan2024unisingularspechtmodules}.
Theorem~\ref{theorem:C} in this article gives a complete classification of unisingular representations of alternating groups.

In the context of automorphic forms, Prasad and Raghunathan~\cite{zbMATH07612832} introduced the notion of immersion of representations.
Their definition can be adapted to finite dimensional representations by saying that a representation $V$ of $G$ is \emph{immersed} in a representation $W$ of $G$ if, for every $g\in G$ and every $z\in \mathbb C$, the multiplicity of $z$ as an eigenvalue of $g$ in $W$ is at least as large as the multiplicity of $z$ as an eigenvalue of $g$ in $V$.

A representation $V$ of $G$ is unisingular if and only if the trivial representation of $G$ is immersed in $V$.
A description of the immersion partial order on the set of irreducible representations of a finite group $G$ is often a very interesting problem.
For polynomial representations of general linear groups, it can be rephrased in terms of monomial positivity of a difference of Schur functions and has been studied in~\cite{johnston2024immersionposetpartitions}.

\subsection{Cyclic Representations}\label{sec:cyclic}
Given $g\in G$ of order $m$, let $\cyc g$ denote the cyclic subgroup of $G$ generated by $g$.
By abuse of notation, let $\zeta_m^i$ denote the character of $\cyc g$ which takes $g$ to $\zeta_m^i$.
The representations $\Ind_{\cyc g}^G \zeta_m^i$ of $G$ induced from characters of cyclic groups are called \emph{cyclic} representations.
Their study was initiated by Artin in the context of Artin $L$-functions~\cite{MR3069563}.
By Frobenius reciprocity, Problems~\ref{problem:multiplicity-problem}--\ref{problem:unisingular-representations} can be rephrased in terms of cyclic representations.
\begin{enumerate}[A.]
    \item Given $g\in G$ of order $m$, and an integer $i$, determine the multiplicity of each irreducible representation of $G$ in $\Ind_{\cyc g}^G \zeta_m^i$.
    \item Determine all triples $(g,V,i)$ such that $V$ occurs in $\Ind_{\cyc g}^G \zeta_m^i$.
    \item Determine all pairs $(g,V)$ such that $V$ occurs in $\Ind_{\cyc g}^G 1$.
    \item Determine all $g\in G$ such that every irreducible representation $V$ of $G$ occurs in $\Ind_{\cyc g}^G 1$.
\end{enumerate}
For any classical Coxeter group, Kra\'skiewicz and Weyman~\cite{KW} identified cyclic representations induced from the cyclic subgroup generated by a Coxeter element $g$ as a sum of certain graded pieces in its co-invariant algebra.
This allowed them to give a combinatorial interpretation of $a^\chi_{g,i}$.
Stembridge~\cite{Stembridge1989ONTE} gave a combinatorial interpretation of all $a^\chi_{g,i}$ for all $g$ in symmetric groups and wreath product groups.
J\"ollenbeck and Schocker~\cite{Jllenbeck2000CyclicCO} built on~\cite{KW} to give a new approach to Stembridge's result for symmetric groups using Lie idempotents in the symmetric group algebra.

\subsection{Minimal Polynomials}\label{sec:minimal}
The study of the minimal polynomial of $g\in G$ in a representation $V$ of $G$ arose in the work of Hall and Higman~\cite{MR0072872} (see Theorem~B) in the context of $p$-soluble groups.
Given $g\in G$ of order $m$, the minimal polynomial of $g$ in $V$ is a divisor of $x^m-1$.
The minimal polynomial is equal to $x^m-1$ if and only if every $m$th root of unity is an eigenvalue of $g$ in $V$.
Thus, a solution to Problem~\ref{problem:positivity-problem} will lead the determination of pairs $(g,V)$ as in Problem~\ref{problem:multiplicity-problem} such that the minimal polynomial of $g$ in $V$ is $x^m-1$.

Yang and Staroletov~\cite{MR4328100} determined necessary and sufficient conditions for those elements of symmetric or alternating groups all of whose non-trivial cycles have the same length $m$ to have minimal polynomial $x^m-1$.
In particular, they determined when the minimal polynomial of an element with the longest cycle in $A_n$ or $S_n$ is $x^m-1$.
For $n$-cycles in $S_n$, the representations for which the minimal polynomial is $x^n-1$ had been characterized by Swanson~\cite[Theorem~1.5]{MR3857157}.
The determination of all pairs $(g,V)$ as in Problem~\ref{problem:multiplicity-problem} such that the minimal polynomial of $g$ in $V$ is $x^m-1$ where $m$ is the order of $g$ remains open in general for symmetric and alternating groups.

\subsection{Main Results}\label{sec:main-results}
In this article, we solve Problems~\ref{problem:multiplicity-problem},~\ref{problem:locally-invariant-vectors}, and~\ref{problem:unisingular-representations} for the alternating groups $A_n$.
Before discussing those statements, it is helpful to recall the solution to Problem~\ref{problem:multiplicity-problem} for symmetric groups.

For a partition $\lambda$ of $n$, let $V_\lambda$ denote the irreducible representation of $S_n$ corresponding to $\lambda$.
For a permutation $w_\mu$ with cycle type $\mu$, let $a^\lambda_{\mu,i}$ denote the multiplicity of $\zeta_m^i$ as an eigenvalue of $w_\mu$ in $V_\lambda$.
The representation $V_\lambda$ has a basis indexed by $\SYT(\lambda)$, the set of standard Young tableaux of shape $\lambda$.
Stembridge~\cite{Stembridge1989ONTE} defined a statistic $\Ind$ (called the index) on standard Young tableaux such that
\begin{equation}
    \label{eq:multi-major}
    a^\lambda_{\mu,i} = \#\{T\in \SYT(\lambda)\mid \Ind(T) \equiv i \text{ mod }m\},
\end{equation}
thereby solving Problem~\ref{problem:multiplicity-problem} for symmetric groups.

Let us set up the notation to explain our solution to Problem~\ref{problem:multiplicity-problem} for alternating groups.
Recall that the Young diagram of a partition $\lambda$ is a left-justified array of boxes with $\lambda_i$ boxes in the $i$-th row.
The Young diagram of the conjugate partition $\lambda'$ is obtained by reflecting the Young diagram of $\lambda$ along the main diagonal.
Thus, $(3,3,1)'=(3,2,2)$, as shown below.
\begin{displaymath}
    \vcenter{\hbox{
        \begin{tikzpicture}[scale=0.5]
            \foreach \n [count=\y] in {3,3,1} {
                \foreach \x in {1,...,\n} {
                    \draw (\x,-\y) rectangle ++(1,1);
                }
            }
            \draw[dashed, red] (1,0) -- (4,-3);
        \end{tikzpicture}
    }}
    \quad \longrightarrow \quad
    \vcenter{\hbox{
        \begin{tikzpicture}[scale=0.5]
            \foreach \n [count=\y] in {3,2,2} {
                \foreach \x in {1,...,\n} {
                    \draw (\x,-\y) rectangle ++(1,1);
                }
            }
            \draw[dashed, red] (1,0) -- (4,-3);
        \end{tikzpicture}
    }}
\end{displaymath}
A partition $\lambda$ is called \emph{self-conjugate} if $\lambda=\lambda'$.
The restriction of the irreducible representation $V_\lambda$ of $S_n$ to $A_n$ is irreducible unless $\lambda$ is self-conjugate.
Moreover, $V_\lambda$ and $V_{\lambda'}$ are isomorphic as representations of $A_n$.
When $\lambda$ is self-conjugate, there exist irreducible representations $V_\lambda^\pm$ of $A_n$ such that $V_\lambda = V_\lambda^+\oplus V_\lambda^-$ as a representation of $A_n$.

The Frobenius coordinates of a partition $\lambda$ are $(a_1,\dotsc,a_d\mid b_1,\dotsc,b_d)$, where $d=\#\{i\mid \lambda_i\geq i\}$,
and for each $i=1,\dotsc,d$, $a_i=\lambda_i-i$ and $b_i=\lambda'_i-i$.
Thus, the Frobenius coordinates of $(3,3,1)$ are $(2,1\vert 2,0)$, while those of $(3,2,2)$ are $(2,0\vert 2,1)$.
The partition $\lambda$ is self-conjugate if and only if $a_i=b_i$ for all $i=1,\dotsc,d$.

Given a partition $\mu = (\mu_1,\dotsc,\mu_k)$ with distinct odd parts, let $\phi(\mu)$ denote the partition with Frobenius coordinates $(a_1\dotsb a_k\vert a_1\dotsb a_k)$, where $a_i=(\mu_i-1)/2$ for $i=1,\dotsc,k$.
The function $\phi$ gives a bijection from the set of partitions of $n$ with distinct odd parts onto the set of self-conjugate partitions of $n$ for each $n$.

The solution to Problem~\ref{problem:multiplicity-problem} for alternating groups depends on arithmetic properties of the cycle type of a permutation.
\begin{notation*}
    For $\mu=(\mu_1,\dotsc,\mu_k)$, let $M=\prod_{j=1}^k \mu_j$ and $m=\mathrm{lcm}(\mu_1,\dotsc,\mu_k)$.
    Write $M = \prod_{j=1}^r p_j^{e_j}$, where $p_1,\dotsc,p_r$ are distinct primes, $e_1,\dotsc,e_s$ are odd, and $e_{s+1},\dotsc,e_r$ are even.
    We have $m=\prod_{j=1}^r p_j^{f_j}$, for some $0<f_j\leq e_j$.
    Suppose that $i\equiv u_j p_j^{d_j}\mod p_j^{f_j}$ with $u_j$ coprime to $p_j$ and $0\leq d_j\leq f_j$ for $j=1,\dotsc,k$.
\end{notation*}
We have the following solution to Problem~\ref{problem:multiplicity-problem} for alternating groups.
\begin{customtheorem}{A}\label{theorem:A}
    Given an element $w$ of order $m$, and an irreducible representation $V$ of $A_n$, the multiplicity of $\zeta_m^i$ as an eigenvalue of $w$ in $V$ is given by
    \begin{enumerate}[1.]
        \item $a^\lambda_{\mu,i}$ if $w$ has cycle type $\mu$ and $V=V_\lambda$ with $\lambda\neq \lambda'$,
        \item $\frac 12 \left(a^\lambda_{\mu,i} \pm \sqrt{\frac{M}{\prod_{j=1}^s p_j}}\frac{\prod_{d_j=f_j}(p_j-1)}{\prod_{j=s+1}^r p_j}\right)$, when $w$ has cycle type $\mu$ with distinct odd parts, $\lambda=\lambda'=\phi(\mu)$, $V=V_\lambda^\pm$, $d_j=f_j-1$ for $j=1,\dotsc,s$, and $d_j\in \{f_j-1, f_j\}$ for $j=s+1,\dotsc,r$.
        \item $\frac 12 a^\lambda_{\mu,i}$ otherwise.
        \end{enumerate}
\end{customtheorem}
The proof of this theorem appears in Section~\ref{sec:main}, along with simpler expressions when $i=0$ or $i=1$ (Corollary~\ref{corollary:zero-and-one}).
Theorem~\ref{theorem:A} implies that when $\lambda$ is self-conjugate, the multiplicity of $\zeta_m^i$ as an eigenvalue of $w$ in $V_\lambda^\pm$ is given by
\begin{equation}\label{eq:multiplicity-difference}
    \frac 12 (a^\lambda_{\mu,i} \pm \delta^\lambda_{\mu,i}), \text{ where } |\delta^\lambda_{\mu,i}| \leq \sqrt M.
\end{equation}
The quantity $\delta^\lambda_{\mu,i}$ may be regarded as the \emph{bias} in dividing the multiplicity of the eigenvalue $\zeta_m^i$ of $w$ in $V_\lambda$ among $V_\lambda^+$ and $V_\lambda^-$.
The condition in Case~2 of Theorem~\ref{theorem:A} is a necessary and sufficient condition for this bias to be non-zero.
Equation~\eqref{eq:multiplicity-difference} is an upper bound on this bias.

We obtain the following complete solution to Problem~\ref{problem:locally-invariant-vectors} (hence also Problem~\ref{problem:unisingular-representations}) for alternating groups.
\begin{customtheorem}{C}\label{theorem:C}
    For every irreducible representation $V$ of $A_n$, and every $w\in A_n$, there exists a non-zero vector in $V$ that is invariant under $w$ unless one of the following holds:
    \begin{enumerate}[1.]
        \item $V=V_{(2,1)}^\pm$ and $w$ has cycle type $(3)$,
        \item $V=V_{(2,2)}^\pm$ and $w$ has cycle type $(3,1)$,
        \item $V=V_{(4,4)}$ and $w$ has cycle type $(5,3)$.
        \item $V=V_{(n-1,1)}$ and $w$ is an $n$-cycle, where $n>3$ is odd.
    \end{enumerate}
    Consequently, the only non-unisingular representations of $A_n$ are $V_{(2,1)}^\pm$, $V_{(2,2)}^\pm$, $V_{(4,4)}$, and $V_{(n-1,1)}$ for odd $n>3$.
\end{customtheorem}
The proof of this theorem (see Section~\ref{sec:cyclic-an}) is based on Theorem~\ref{theorem:A} and results of Swanson~\cite{MR3857157} and Yang and Staroletov~\cite{MR4328100}.
\subsection{Global Conjugacy Classes}\label{sec:global-intro}

The group $G$ acts on itself by conjugation.
Let $Z_G(g)$ denote the centralizer of $g$ in $G$, and $1$ denote the trivial representations of $Z_G(g)$.
\begin{customproblem}{E}\label{problem:global-classes}
    Determine all conjugacy classes $C$ of $G$ such that, for any $g\in C$, $\Ind_{Z_G(g)}^G 1$ contains at least one copy of each irreducible representation of $G$.
\end{customproblem}
Conjugacy classes of $G$ that satisfy the condition of Problem~\ref{problem:global-classes} were called \emph{global conjugacy classes} by Heide and Zalessky~\cite{MR2279239}, who studied them in the context of determining the kernel of the adjoint representation of the group algebra of $G$.
They showed that global conjugacy classes exist for $A_n$ for all $n>4$.
In Section~\ref{sec:global} we show that elements of $A_n$ whose cycle type is a partition with at least two parts, all parts odd, and no part appearing more than twice form a global conjugacy class in $A_n$ for all $n$ except $(3,1)$, $(3,3)$, $(5,3)$ and $(3,3,1,1)$.
In particular, we recover the result of Heide and Zalessky for $A_n$ (compare with the proof of Theorem~4.3 in~\cite{MR2279239}).

A complete classification of global conjugacy classes for symmetric groups was obtained by Sundaram~\cite{MR3805649}.
The following problem remains open.
\begin{customproblem}{F}
    Determine all the global conjugacy classes of alternating groups.
\end{customproblem}
Problems~\ref{problem:locally-invariant-vectors} and~\ref{problem:global-classes} can be unified by the following definition.
\begin{definition}[Global Subgroup]
    Given a finite group $G$, a subgroup $H$ of $G$ is said to be a \emph{global subgroup} if every irreducible representation of $G$ occurs in $\Ind^G_H 1$.
\end{definition}
With this terminology, Problem~\ref{problem:locally-invariant-vectors} and~\ref{problem:global-classes} ask for the classification of global cyclic and global centralizer subgroups respectively.
For example, Gianelli and Law~\cite[Theorem~A]{MR3800085} establish that for an odd prime $p$, a $p$-Sylow subgroup of a symmetric group $S_n$ is global except when $n\leq 10$ or $n$ is a power of $p$ (in which case it is still very close to being global).

Clearly, if $K\subset H$ are subgroups of $G$ and $H$ is global, then so is $K$.
Thus, global subgroups form an order ideal in the lattice of subgroups of a group $G$.
The following problem for symmetric and alternating groups should be of great interest.
\begin{customproblem}{G}
    Determine the maximal global subgroups of a finite group $G$.
\end{customproblem}

\section{Preliminaries}
\subsection{The Case of Symmetric Groups}\label{sec:symmetric-groups}
Stembridge's solution~\eqref{eq:multi-major} to Problem~\ref{problem:multiplicity-problem} for symmetric groups does not give an efficient solution to Problem~\ref{problem:positivity-problem}.
In order to decide if $a^\lambda_{\mu,i}$ is positive, one would have to generate all standard Young tableaux of shape $\lambda$ and check if any of them satisfy the congruence condition in~\eqref{eq:multiplicity}.
Certainly, it does not imply the following stunning result of Swanson~\cite[Theorem~1.5]{MR3857157}:
\begin{theorem}[Swanson]\label{theorem:swanson}
    Let $w_n$ denote an $n$-cycle in $S_n$.
    Then $\zeta_n^i$ is an eigenvalue of $w_n$ in $V_\lambda$ for $i=0,\dotsc,n-1$, except in the following cases:
    \begin{enumerate}[1.]
        \item $\lambda=(2,2)$ and $i=1,3$,
        \item $\lambda=(2,2,2)$ and $i=1,5$,
        \item $\lambda=(3,3)$ and $i=2,4$,
        \item $\lambda=(2,1^{n-2})$ and $i=0$ when $n$ is odd, $i=n/2$ when $n$ is even,
        \item $\lambda=(n)$, $i\neq 0$ if $n$ is odd, $i\neq n/2$ if $n$ is even.
    \end{enumerate}
\end{theorem}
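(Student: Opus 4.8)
The plan is to prove Theorem~\ref{theorem:swanson} by the standard root-of-unity method (the one behind the cyclic sieving phenomenon for standard Young tableaux). First I would reduce the statement about eigenvalues to a statement about the major index. Since an $n$-cycle $w_n$ generates a cyclic group of order $n$, the multiplicity of $\zeta_n^i$ as an eigenvalue of $w_n$ in $V_\lambda$ equals $\langle\Ind_{\cyc{w_n}}^{S_n}\zeta_n^i,V_\lambda\rangle$; by the theorem of Kra\'skiewicz and Weyman~\cite{KW} --- equivalently the $\mu=(n)$ case of Stembridge's identity~\eqref{eq:multi-major}, for which $m=n$ and $\Ind(T)=\maj(T)$ --- this multiplicity equals
\[
b^\lambda_i(n):=\#\{T\in\SYT(\lambda)\mid \maj(T)\equiv i\bmod n\}.
\]
Thus Theorem~\ref{theorem:swanson} amounts to determining, for each $\lambda\vdash n$, which residues $i$ the statistic $\maj$ misses on $\SYT(\lambda)$; and since $\maj(T')=\binom n2-\maj(T)$, the answer for $\lambda'$ follows from that for $\lambda$, which halves the casework.

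Next I would extract these counts from the $q$-analogue of the hook length formula. From $\sum_{T\in\SYT(\lambda)}q^{\maj(T)}=f^\lambda(q):=[n]_q!/\prod_{c\in\lambda}[h(c)]_q$, where $h(c)$ denotes the hook length of the cell $c$, one gets $n\,b^\lambda_i(n)=\sum_{j=0}^{n-1}f^\lambda(\zeta_n^j)\,\zeta_n^{-ij}$. For a root of unity $\zeta$ of order $e$ one has $[k]_\zeta=0$ exactly when $e\mid k$, so $f^\lambda$ vanishes at $\zeta$ to order $\lfloor n/e\rfloor-\#\{c\in\lambda\mid e\mid h(c)\}$; hence $f^\lambda(\zeta)\neq0$ if and only if $\lambda$ has exactly $\lfloor n/e\rfloor$ cells of hook length divisible by $e$, equivalently if and only if the $e$-core of $\lambda$ has size $n\bmod e$. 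When this holds, $f^\lambda(\zeta)$ equals, up to a sign and a root of unity, a multinomial-type product built from the $e$-quotient of $\lambda$, and for $e>1$ it is considerably smaller than $f^\lambda(1)=\#\SYT(\lambda)$. Grouping the summation index $j$ according to $d=\gcd(j,n)$, so that $\zeta_n^j$ has order $e=n/d$ and $e=1$ contributes exactly $f^\lambda(1)$, this yields
\[
n\,b^\lambda_i(n)=\#\SYT(\lambda)+\sum_{\substack{e\mid n,\ e>1\\ \lambda\text{ has empty }e\text{-core}}}\ \ \sum_{\substack{0<j<n\\ \gcd(j,n)=n/e}}f^\lambda(\zeta_n^j)\,\zeta_n^{-ij}.
\]

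The remaining work is a positivity estimate together with a small finite analysis. Each inner sum above is a Ramanujan-type sum of absolute value at most $\varphi(e)\,\bigl|f^\lambda(\zeta_n^{n/e})\bigr|$, so $b^\lambda_i(n)>0$ for every $i$ provided
\[
\#\SYT(\lambda)>\sum_{\substack{e\mid n,\ e>1\\ \lambda\text{ has empty }e\text{-core}}}\varphi(e)\,\bigl|f^\lambda(\zeta_n^{n/e})\bigr|,
\]
and one checks that this inequality holds for all $\lambda$ outside a short list of ``thin'' shapes. Those shapes are then handled directly: the row $(n)$ and column $(1^n)$, for which $f^\lambda(q)$ equals $1$ and $q^{\binom n2}$; the near-hooks $(n-1,1)$ and $(2,1^{n-2})$, for which $f^\lambda(q)$ equals $[n-1]_q$ up to a power of $q$, a sum of $n-1$ consecutive powers of $q$ that misses exactly one residue modulo $n$; and the three sporadic shapes $(2,2)$, $(2,2,2)$, $(3,3)$, which are dispatched by writing out the multiset $\{\maj(T)\mid T\in\SYT(\lambda)\}$ and reducing modulo $n$. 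Together with their conjugates, these account for cases~1--5 of the theorem.

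The step I expect to be the main obstacle is the uniform domination inequality: it requires a lower bound on $\#\SYT(\lambda)$ (via the hook length formula) strong enough to beat the sum of the root-of-unity evaluations, yet sharp enough to exclude precisely the listed exceptions and no others. The most workable approach is probably to first dispose of all partitions with at most two rows or at most two columns, where both $f^\lambda(q)$ and the $e$-quotient contributions are explicit products of $q$-binomial coefficients, and then to treat partitions with at least three rows and three columns by an induction that peels off the first row and the first column, tracking how the $e$-cores and $e$-quotients change and using the fact that enlarging a partition only increases its number of standard Young tableaux.
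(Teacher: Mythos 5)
First, note that the paper does not prove this statement at all: Theorem~\ref{theorem:swanson} is quoted from Swanson~\cite[Theorem~1.5]{MR3857157} and used as a black box, so there is no internal proof to compare yours against. Your outline is in fact essentially the strategy of the cited source: reduce to counting $\SYT(\lambda)$ by $\maj$ modulo $n$ via Kra\'skiewicz--Weyman, apply discrete Fourier inversion to the principal specialization, evaluate it at roots of unity via $e$-cores and $e$-quotients, and show that the $e=1$ term dominates outside a finite list of thin shapes. Two small corrections to the setup: the generating function identity needs the factor $q^{b(\lambda)}$ with $b(\lambda)=\sum_i\binom{\lambda_i'}{2}$, i.e.\ $\sum_T q^{\maj(T)}=q^{b(\lambda)}[n]_q!/\prod_c[h(c)]_q$ (your later assertion that the column shape contributes $q^{\binom n2}$ silently uses this factor, contradicting your definition); and to identify the inner sums as Ramanujan sums you must first argue that $f^\lambda(\zeta)$ is, up to that omitted power of $\zeta$, a rational integer depending only on the order of $\zeta$, which deserves a sentence.

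The genuine gap is the step you yourself flag: ``one checks that this inequality holds for all $\lambda$ outside a short list of thin shapes.'' That domination inequality is not a routine verification --- it is essentially the entire content of Swanson's paper. The crude bound $\varphi(e)\,\bigl|f^\lambda(\zeta_n^{n/e})\bigr|$ on each inner sum does not suffice on its own: for shapes with large $e$-quotients the root-of-unity evaluations are multinomial products that can be comparable in size to a large power of $\#\SYT(\lambda)$, so one needs sharp lower bounds on $\#\SYT(\lambda)$ against these products, an induction organized by divisors of $n$, and a sizable finite check to isolate exactly the listed exceptions and no others. As written, then, your proposal correctly reproduces the known proof architecture but does not constitute a proof; the honest options are to carry out those estimates in full or to do as the paper does and cite \cite{MR3857157}. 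One further caution for your endgame: when you compute the missed residues for the thin shapes directly, you will find that for $\lambda=(n)$ the exceptions are all $i\neq 0$ regardless of the parity of $n$ (the trivial representation has only the eigenvalue $1$), and that the ``$i\neq n/2$ for $n$ even'' clause belongs to the conjugate shape $(1^n)$; reconcile your list against Swanson's original statement rather than the version quoted in the paper, which conflates conjugate pairs in cases 4 and 5.
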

Using Swanson's result and the Littlewood-Richardson rule, the following solution to Problem~\ref{problem:locally-invariant-vectors} for symmetric groups was obtained in~\cite{elementwise}.
\begin{theorem}\label{theorem:elementwise-sn}
    The only pairs of partitions $(\lambda,\mu)$ of a given integer $n$ such that $w_\mu$ does not admit a nonzero invariant vector in $V_\lambda$ are the following:
    \begin{enumerate}[1.]
    \item $\lambda=(1^n)$, $\mu$ is any partition of $n$ for which $w_\mu$ is odd,
    \item $\lambda=(n-1,1)$, $\mu=(n)$, $n\geq 2$,
    \item $\lambda=(2,1^{n-2})$, $\mu=(n)$, $n\geq 3$ is odd,
    \item $\lambda=(2^2,1^{n-4})$, $\mu=(n-2,2)$, $n\geq 5$ is odd,
    \item $\lambda=(2,2)$, $\mu=(3,1)$,
    \item $\lambda=(2^3)$, $\mu=(3,2,1)$,
    \item $\lambda=(2^4)$, $\mu=(5,3)$,
    \item $\lambda=(4,4)$, $\mu=(5,3)$,
    \item $\lambda=(2^5)$, $\mu=(5,3,2)$.
    \end{enumerate}
    Consequently, the non-unisingular representations of symmetric groups consist of four infinite families $V_{(1^n)}$ ($n\geq 2$), $V_{(n-1,1)}$ ($n\geq 2$), $V_{(2,1^{n-2})}$ ($n\geq 3$, odd), $V_{(2^2,1^{n-4})}$ ($n\geq 5$, odd) and five exceptional cases $V_{(2^2)}$, $V_{(2^3)}$, $V_{(2^4)}$, $V_{(4^2)}$, $V_{(2^5)}$.
\end{theorem}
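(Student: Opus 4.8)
The plan is to rephrase the statement through Frobenius reciprocity and then reduce the case of a general cycle type to that of a single cycle, where Swanson's Theorem~\ref{theorem:swanson} applies. By Frobenius reciprocity, $w_\mu$ admits a non-zero invariant vector in $V_\lambda$ if and only if the multiplicity $a^\lambda_{\mu,0}=\langle V_\lambda,\Ind_{\cyc{w_\mu}}^{S_n}1\rangle$ is positive. When $\lambda=(1^n)$ this multiplicity is $1$ if $w_\mu$ is even and $0$ otherwise, which is case~1; from now on assume $\lambda\neq(1^n)$, the trivial representation $V_{(n)}$ being harmless since a scalar is always fixed. It remains to show that $a^\lambda_{\mu,0}>0$ outside cases 2--9.

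For the positivity I would exploit the chain of subgroups $\cyc{w_\mu}\le C_\mu:=\cyc{c_1}\times\dots\times\cyc{c_k}\le S_\mu\le S_n$, where $c_j$ is a $\mu_j$-cycle and $S_\mu=S_{\mu_1}\times\dots\times S_{\mu_k}$ is the corresponding Young subgroup. The natural surjection of $S_n$-sets $S_n/\cyc{w_\mu}\to S_n/C_\mu$ gives a surjection of permutation modules, which splits by semisimplicity of $\mathbb{C}S_n$, so $\Ind_{C_\mu}^{S_n}1$ is a direct summand of $\Ind_{\cyc{w_\mu}}^{S_n}1$, whence
\[
a^\lambda_{\mu,0}\ \ge\ \Big\langle V_\lambda,\ \Ind_{S_\mu}^{S_n}\big(\textstyle\bigotimes_{j=1}^{k}\Ind_{\cyc{c_j}}^{S_{\mu_j}}1\big)\Big\rangle .
\]
Reading off the $i=0$ clause of Theorem~\ref{theorem:swanson}, $\Ind_{\cyc{c_j}}^{S_{\mu_j}}1$ contains $V_\nu$ for every $\nu\vdash\mu_j$ with the sole exceptions $\nu=(\mu_j-1,1)$ (always), $\nu=(1^{\mu_j})$ (when $\mu_j$ is even), and $\nu=(2,1^{\mu_j-2})$ (when $\mu_j$ is odd). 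Expanding the induced product by the Littlewood--Richardson rule, $a^\lambda_{\mu,0}\ge c^\lambda_{\nu^{(1)},\dots,\nu^{(k)}}$ for every choice of such ``allowed'' $\nu^{(j)}$. Taking all $\nu^{(j)}=(\mu_j)$ recovers Young's rule, so $a^\lambda_{\mu,0}>0$ whenever $\lambda$ dominates $\mu$; replacing one or two of the $\nu^{(j)}$ by a column, a near-column, or a near-rectangle (for odd parts even the full column $(1^{\mu_j})$ is permitted) handles the $\lambda$ that do not dominate $\mu$. For $\mu=(n)$ there is no reduction and Theorem~\ref{theorem:swanson} is already exact, giving precisely the exceptions $\lambda=(n-1,1)$ (case~2) and $\lambda=(2,1^{n-2})$ with $n$ odd (case~3).

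The real work is to check that this reduction settles every pair $(\lambda,\mu)$ outside a short residual list, and then to resolve that list. Since a part equal to $2$ forces $\nu^{(j)}=(2)$ and an even part forbids the column $(1^{\mu_j})$, the undecided pairs are exactly those in which $\lambda$ is extremal (two rows or two columns) and $\mu$ has the right small even parts; a direct Pieri/Littlewood--Richardson bookkeeping shows that, apart from the single infinite family $\lambda=(2^2,1^{n-4})$, $\mu=(n-2,2)$ with $n$ odd, only the finitely many pairs $((2,2),(3,1))$, $((2^3),(3,2,1))$, $((2^4),(5,3))$, $((4,4),(5,3))$, $((2^5),(5,3,2))$ survive. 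For each of the finitely many pairs one evaluates $a^\lambda_{\mu,0}=\tfrac1m\sum_{j=0}^{m-1}\chi_\lambda(w_\mu^j)$ by the Murnaghan--Nakayama rule and verifies that it vanishes. For the infinite family I would instead use $V_{(2^2,1^{n-4})}\cong V_{(n-2,2)}\otimes\sgn$ together with the decomposition $\mathbb{C}[\binom{[n]}{2}]\cong V_{(n)}\oplus V_{(n-1,1)}\oplus V_{(n-2,2)}$: the dimension of the invariant space equals the multiplicity of the eigenvalue $\sgn(w_\mu)=-1$ of $w_\mu$ on $2$-element subsets minus its multiplicity on $\{1,\dots,n\}$, a transparent count of $w_\mu$-orbits that works out to $0$ exactly when $n$ is odd, while for even $n$ the subgroup reduction already yields positivity.

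The step I expect to be the main obstacle is the claim in the previous paragraph that the Littlewood--Richardson reduction decides every pair except the stated finite list together with the one infinite family: this is a somewhat delicate combinatorial analysis of when $c^\lambda_{\nu^{(1)},\dots,\nu^{(k)}}$ vanishes for all allowed tuples, organized by how close $\lambda$ is to a single row or column and by the multiplicities of the parts $2$ and $4$ in $\mu$. Everything else reduces to Swanson's theorem, routine Pieri/Littlewood--Richardson manipulations, and bounded character computations.
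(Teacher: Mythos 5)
Your overall strategy coincides with the one the paper itself relies on: the paper does not prove Theorem~\ref{theorem:elementwise-sn} here but imports it from~\cite{elementwise}, describing that proof as ``using Swanson's result and the Littlewood--Richardson rule.'' Your setup is sound: Frobenius reciprocity, the splitting $\Ind_{\cyc{w_\mu}}^{S_n}1\geq\Ind_{C_\mu}^{S_n}1$ coming from the chain of subgroups, the correct reading of Swanson's theorem at $i=0$ (including the hook exceptions $(\mu_j-1,1)$ and, for odd $\mu_j$, $(2,1^{\mu_j-2})$, which the paper's own restatement in Theorem~\ref{theorem:swanson} in fact omits), the Murnaghan--Nakayama verification that the finitely many listed pairs really are exceptions, and the $V\otimes\sgn$ / two-subset orbit count for the family $\lambda=(2^2,1^{n-4})$, $\mu=(n-2,2)$ are all correct in principle.

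The genuine gap is the step you yourself flag as the main obstacle: the claim that for every $(\lambda,\mu)$ outside the stated list there is an allowed tuple $(\nu^{(1)},\dotsc,\nu^{(k)})$ with $c^\lambda_{\nu^{(1)},\dotsc,\nu^{(k)}}>0$. This is not a routine verification to be deferred --- it is the entire content of the theorem, since the lower bound you establish can only certify positivity, never vanishing, and the classification of where it fails is exactly what must be proved. Your proposed organizing principle (that the undecided pairs are exactly those with $\lambda$ having two rows or two columns and $\mu$ having suitable small even parts) is neither established nor obviously correct: the exceptional shapes include hooks, near-hooks and $(2^2,1^{n-4})$, none of which are two-row or two-column, and one must also rule out failures for partitions $\mu$ with several parts equal to $2$ or $4$, where the allowed $\nu^{(j)}$ are severely constrained. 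The cited source closes this gap with an explicit inductive tableau construction, the analogue of Lemma~\ref{lemma:9} here (whose proof this paper likewise outsources to~\cite[Lemma~9]{elementwise}). As written, your argument shows that the listed pairs are exceptions, but not that they are the only ones.
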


\subsection{Characters of the Alternating Group}\label{sec:alternating}
In this section, we outline how Frobenius~\cite{frob1901} expressed irreducible characters of alternating groups in terms of irreducible characters of symmetric groups.
For a detailed exposition, see~\cite{MR3287258}.

For every partition $\lambda$ of $n$, let $\chi_\lambda$ denote the character of the irreducible representation $V_\lambda$ of $S_n$ corresponding to $\lambda$.
We have
\begin{theorem}[Frobenius~\cite{frob1901}]\label{theorem:frobenius}
    Let $\lambda$ be a partition of $n$.
    \begin{enumerate}
        \item If $\lambda\neq\lambda'$ then the restriction of $\chi_\lambda$ to $A_n$ is irreducible and is isomorphic to the restriction of $\chi_{\lambda'}$ to $A_n$.
        \item If $\lambda=\lambda'$, then the restriction of $\chi_\lambda$ to $A_n$ is the sum of two irreducible characters of $A_n$;
        \begin{equation*}
            \chi_\lambda = \chi_\lambda^+ + \chi_\lambda^-, \label{eq:plusminus}
        \end{equation*}
        where $\chi_\lambda^-(g) = \chi_\lambda^+(wgw^{-1})$ for any $g\in A_n$ and $w\in S_n\setminus A_n$.
    \end{enumerate}
    Moreover, every irreducible character of $A_n$ arises in the above manner.
\end{theorem}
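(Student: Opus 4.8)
The plan is to obtain the theorem from Clifford theory for the normal subgroup $A_n$ of index two in $S_n$, together with the single classical input that tensoring a Specht module with the sign character transposes its indexing partition. The key structural fact I would use is that for a normal subgroup $N$ of prime index in a finite group $G$, the restriction $\Res^G_N V$ of any irreducible representation $V$ of $G$ is multiplicity-free, and is either irreducible or a direct sum of $[G:N]$ pairwise non-isomorphic irreducibles forming one orbit under the conjugation action of $G$. Taking $G=S_n$ and $N=A_n$ with $[G:N]=2$, this already tells us that $\chi_\lambda|_{A_n}$ is either irreducible or a sum $\chi_\lambda^++\chi_\lambda^-$ of two distinct irreducible characters that are interchanged by conjugation by any odd permutation $w$; the asserted relation $\chi_\lambda^-(g)=\chi_\lambda^+(wgw^{-1})$ is just a restatement of the last clause.

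To decide which alternative occurs, I would use the criterion that $\Res^{S_n}_{A_n}V_\lambda$ is reducible if and only if $V_\lambda\otimes\sgn\cong V_\lambda$. This is immediate from Frobenius reciprocity together with $\Ind_{A_n}^{S_n}\Res^{S_n}_{A_n}V_\lambda\cong V_\lambda\otimes\mathbb C[S_n/A_n]\cong V_\lambda\oplus(V_\lambda\otimes\sgn)$, using that $\sgn$ is the unique non-trivial one-dimensional character of $S_n$ since the commutator subgroup of $S_n$ is $A_n$; indeed $\langle\Res^{S_n}_{A_n}V_\lambda,\Res^{S_n}_{A_n}V_\lambda\rangle_{A_n}$ then equals $1$ or $2$ according as $V_\lambda\otimes\sgn\not\cong V_\lambda$ or $V_\lambda\otimes\sgn\cong V_\lambda$. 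Now I invoke the classical identity $V_\lambda\otimes\sgn\cong V_{\lambda'}$ (see~\cite{MR3287258}); since distinct partitions index non-isomorphic irreducible representations of $S_n$, the reducibility criterion becomes exactly $\lambda=\lambda'$. When $\lambda\neq\lambda'$ the restriction is irreducible, and moreover $\chi_{\lambda'}|_{A_n}=(\sgn\cdot\chi_\lambda)|_{A_n}=\chi_\lambda|_{A_n}$ because $\sgn$ restricts trivially to $A_n$; this is part~(1). When $\lambda=\lambda'$ the restriction splits as $\chi_\lambda^++\chi_\lambda^-$ with the stated conjugation symmetry; this is part~(2).

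For the final assertion that every irreducible character $\psi$ of $A_n$ arises in this way, I would argue by Frobenius reciprocity: $\Ind_{A_n}^{S_n}\psi$ is non-zero, so it has an irreducible constituent $\chi_\lambda$, and then $\langle\psi,\chi_\lambda|_{A_n}\rangle_{A_n}=\langle\Ind_{A_n}^{S_n}\psi,\chi_\lambda\rangle_{S_n}>0$, so $\psi$ is a constituent of $\chi_\lambda|_{A_n}$. Hence $\psi$ is either $\chi_\lambda|_{A_n}$ (if $\lambda\neq\lambda'$) or one of $\chi_\lambda^\pm$ (if $\lambda=\lambda'$).

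The only genuinely non-formal ingredient is the identity $V_\lambda\otimes\sgn\cong V_{\lambda'}$, and I regard citing (or proving) it as the main point of the argument. It is classical and I would simply cite~\cite{MR3287258}, but a self-contained proof is available either from the Young-symmetrizer construction of Specht modules --- transposing every tableau of shape $\lambda$ interchanges the row and column stabilizers in the Young symmetrizer, which is precisely the sign twist --- or from the Murnaghan--Nakayama rule, where passing from $\lambda$ to $\lambda'$ reverses the orientation of every border-strip removal and so multiplies each character value on the class of cycle type $\rho$ by $(-1)^{n-\ell(\rho)}=\sgn(w_\rho)$. Everything else is a routine unwinding of the index-two Clifford-theoretic picture, and I anticipate no real difficulty there.
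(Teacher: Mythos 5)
Your argument is correct and complete: the index-two Clifford-theoretic setup, the Frobenius-reciprocity computation showing $\langle\Res^{S_n}_{A_n}V_\lambda,\Res^{S_n}_{A_n}V_\lambda\rangle_{A_n}=1+\langle V_\lambda,V_\lambda\otimes\sgn\rangle_{S_n}$, and the classical identity $V_\lambda\otimes\sgn\cong V_{\lambda'}$ together yield all three assertions. The paper does not actually prove this theorem --- it is stated as a classical result of Frobenius with a pointer to the literature for a detailed exposition --- and the proof you give is precisely the standard one found there, so there is nothing to correct and no substantive divergence to report.
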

For every partition $\mu$ of $n$, fix a permutation $w_\mu$ with cycle type $\mu$.
Note that $w_\mu \in A_n$ if and only if $\mu$ has an even number of even parts.
In this case, permutations with cycle type $\mu$ form a single conjugacy class in $A_n$ unless the parts of $\mu$ are distinct and odd, in which case they form two conjugacy classes, represented by $w_\mu$ and $ww_\mu w^{-1}$ for any $w\in S_n\setminus A_n$.

When $\mu=(2a_1+1,\dotsc,2a_d+1)$ is a partition with distinct odd parts, let $\epsilon_\mu={(-1)}^{a_1+\dotsb+a_d}$.
Let $M$ denote the product of the parts of $\mu$, which is also the cardinality of the centralizer of $w_\mu$ in $S_n$.
Frobenius showed the following.
\begin{theorem}\label{theorem:plusminus}
    Let $\lambda$ be a self-conjugate partition of $n$.
    The values of the characters $\chi_\lambda^\pm$ are given by
    \begin{gather}
        \label{eq:chiplusminus}
        \chi_\lambda^\pm(w_\mu) = 
        \begin{cases}
            \frac 12 (\epsilon_\mu\pm \sqrt{\epsilon_\mu M}) & \text{if $\mu$ has distinct odd parts and $\lambda=\phi(\mu)$,}\\
            \frac 12 \chi_\lambda(w_\mu) & \text{otherwise.}
        \end{cases}
        \\
        \chi_\lambda^\pm(w w_\mu w^{-1}) = \chi_\lambda^\mp(w_\mu), \text{ for any } w\in S_n\setminus A_n.
    \end{gather}
\end{theorem}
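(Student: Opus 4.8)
The plan is to reduce everything to the conjugacy classes of $A_n$ that split, and to the ``difference characters'' $\delta_\lambda := \chi_\lambda^+ - \chi_\lambda^-$, and then to evaluate these using border‑strip combinatorics (the Murnaghan–Nakayama rule) together with the orthogonality relations of $A_n$. First I would dispose of the non‑split case. By Theorem~\ref{theorem:frobenius}, $\chi_\lambda^+ + \chi_\lambda^- = \chi_\lambda|_{A_n}$ and $\chi_\lambda^-(g) = \chi_\lambda^+(wgw^{-1})$ for any $w \in S_n \setminus A_n$. If $w_\mu$ lies in an $A_n$‑class that does not split — equivalently, $\mu$ does not have distinct odd parts — then $w w_\mu w^{-1}$ is $A_n$‑conjugate to $w_\mu$, hence $\chi_\lambda^+(w_\mu) = \chi_\lambda^-(w_\mu)$ and so $\chi_\lambda^\pm(w_\mu) = \tfrac12 \chi_\lambda(w_\mu)$; the same observation gives the displayed identity $\chi_\lambda^\pm(w w_\mu w^{-1}) = \chi_\lambda^\mp(w_\mu)$ in all cases (for split classes it is part (2) of Theorem~\ref{theorem:frobenius}; for non‑split ones both sides equal $\tfrac12\chi_\lambda(w_\mu)$). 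Since $\chi_\lambda^\pm = \tfrac12(\chi_\lambda|_{A_n} \pm \delta_\lambda)$ and $\delta_\lambda$ vanishes off the split classes, it remains to compute $\chi_\lambda(w_\mu)$ and $\delta_\lambda(w_\mu)$ for $\mu = (\mu_1,\dotsc,\mu_k)$ with distinct odd parts.

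Here $\lambda$ is self‑conjugate by hypothesis; write $a_j = (\mu_j - 1)/2$, so $a_1 > \dotsb > a_k \ge 0$. The Murnaghan–Nakayama rule evaluates $\chi_{\phi(\mu)}(w_\mu) = \epsilon_\mu$ directly: the largest hook length of $\phi(\mu)$ is $2\lambda_1 - 1 = \mu_1$, attained only at the corner, so the unique border strip of that size is the full rim, of leg length $\lambda_1 - 1 = a_1$; removing it leaves the self‑conjugate partition $\phi(\mu_2,\dotsc,\mu_k)$, so the recursion is forced and contributes $\prod_j (-1)^{a_j} = \epsilon_\mu$. The substantive point is that $\delta_\lambda$ is supported on the \emph{single} split cycle type $\phi^{-1}(\lambda)$ (two $A_n$‑classes) and takes the value $\pm\sqrt{\epsilon_\mu M}$ there. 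For this I would invoke Frobenius's determinantal formula for $\chi_\lambda^\pm$, equivalently the theory of Schur $Q$‑functions: the subalgebra generated by the $q_r$, where $\sum_r q_r t^r = \exp\!\big(2\sum_{r\text{ odd}} p_r t^r / r\big)$, consists of symmetric functions supported on power sums $p_\nu$ with $\nu$ odd, and the classical expression of $\delta_{\phi(\mu)}$ through these $q_r$, together with the fact that $z_\nu = M$ when $\nu$ has distinct parts, yields both the vanishing of $\delta_{\phi(\mu)}$ away from $\mu$ and its value at $\mu$. One also checks — via the classical fact that the parity of a permutation conjugating $w_\mu^k$ to $w_\mu$, for $\gcd(k,M) = 1$, is the Jacobi symbol $\tlegendre{k}{M}$ — that on every split class the two values $\chi_\lambda^\pm(w_\mu)$ lie in $\mathbb{Q}(\sqrt{\epsilon_\mu M})$ and are interchanged by its nontrivial automorphism; since $M$ is odd, algebraic integrality then forces $\delta_\lambda(w_\mu)$ to be an integer multiple of $\sqrt{\epsilon_\mu M}$.

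Finally I would pin down the constant using orthogonality. Since $\chi_\lambda^+$ and $\chi_\lambda^-$ are distinct irreducible characters of $A_n$, $(\delta_\lambda, \delta_\lambda)_{A_n} = 2$; with the support of $\delta_\lambda$ located on the two $A_n$‑classes of cycle type $\mu_0 := \phi^{-1}(\lambda)$, each of size $|A_n|/M$, this reads $|\delta_\lambda(w_{\mu_0})|^2 = M$. Combined with $\chi_\lambda^+(w_{\mu_0}) + \chi_\lambda^-(w_{\mu_0}) = \chi_\lambda(w_{\mu_0}) = \epsilon_{\mu_0}$ and the Galois statement above, this forces $\chi_\lambda^\pm(w_{\mu_0}) = \tfrac12(\epsilon_{\mu_0} \pm \sqrt{\epsilon_{\mu_0} M})$, the sign being absorbed into the convention naming one constituent $\chi_\lambda^+$. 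I expect the main obstacle to be the middle step: the orthogonality relations alone only show that the matrix $\big(\delta_\lambda(w_\mu)/\sqrt{M_\mu}\big)_{\lambda,\mu}$, indexed by self‑conjugate partitions and distinct‑odd cycle types, is orthogonal, and an orthogonal integer‑scaled matrix need not be a signed permutation matrix; so genuinely identifying the support and the value of $\delta_\lambda$ requires the symmetric‑function (or Frobenius determinant) input — which is exactly where the square root, the trace of the ``spin'' phenomenon attached to self‑conjugate partitions, enters the formula.
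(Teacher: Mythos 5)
The paper does not prove this theorem: it is stated as a classical result of Frobenius~\cite{frob1901} and used as input, so there is no in-paper argument to compare against. Judged on its own terms, your outline reproduces the standard proof in the correct order: the non-split case and the identity $\chi_\lambda^\pm(ww_\mu w^{-1})=\chi_\lambda^\mp(w_\mu)$ do follow immediately from Theorem~\ref{theorem:frobenius}; the Murnaghan--Nakayama computation of $\chi_{\phi(\mu)}(w_\mu)=\epsilon_\mu$ is correct (the unique border strip of size $\mu_1$ is the principal rim hook, of leg length $a_1$, and its removal leaves $\phi((\mu_2,\dotsc,\mu_k))$); the class sizes and the norm computation $(\delta_\lambda,\delta_\lambda)_{A_n}=2$ are right; and the Galois/integrality argument via $\tlegendre{k}{M}$ (which is exactly Theorem~\ref{theorem:power_class} of the paper) correctly pins $\delta_\lambda(w_{\mu_0})$ down to $\pm\sqrt{\epsilon_{\mu_0}M}$ once its absolute value is known.

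The genuine gap is the one you name yourself: the claim that $\delta_\lambda$ vanishes on every split class other than $\phi^{-1}(\lambda)$ is asserted, not proved, and your row-orthogonality step silently assumes it (you use the support to deduce $|\delta_\lambda(w_{\mu_0})|^2=M$, while the support itself is deferred to unquoted symmetric-function machinery). Two remarks. First, the division of labour is slightly muddled: if the Frobenius/$Q$-function input really ``yields both the vanishing \dots and its value at $\mu$,'' then the orthogonality paragraph is redundant; if it yields only the support, you must say which statement you are actually importing. Second, there is a cleaner classical way to close the gap that you do not mention: the second (column) orthogonality relation for $A_n$, applied to the pair of classes $w_\mu$ and $ww_\mu w^{-1}$, gives $\sum_{\lambda=\lambda'}|\delta_\lambda(w_\mu)|^2=M_\mu$ for each split type $\mu$; hence a single direct computation showing $|\delta_{\phi(\mu)}(w_\mu)|^2=M_\mu$ forces all other $\delta_\lambda(w_\mu)$ to vanish, delivering support and value simultaneously. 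That direct computation (Frobenius's determinant for the difference character, or the recursive principal-hook argument as in James--Kerber) is the irreducible core of the theorem and is precisely what your proposal leaves unexecuted; as written, the argument is an accurate map of the proof rather than a proof.
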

\subsection{Conjugacy Classes of Powers and Jacobi Symbols}\label{sec:jacobi}
Recall that for an integer $i$ and a prime $p$, the Legendre symbol is defined by
\begin{displaymath}
    \legendre{i}{p} = \begin{cases}
        1 & \text{if $i$ is a non-zero quadratic residue modulo $p$,}\\
        -1 & \text{if $i$ is a quadratic non-residue modulo $p$,}\\
        0 & \text{if $p\mid i$.}
    \end{cases}
\end{displaymath}
For an odd integer $n=p_1^{e_1}\dotsm p_k^{e_k}$, the Jacobi symbol is defined by
\begin{displaymath}
    \legendre{i}{n} = \legendre{i}{p_1}^{e_1}\dotsm \legendre{i}{p_k}^{e_k}.
\end{displaymath}
We have the following.
\begin{theorem}\label{theorem:power_class}
    Let $\mu=(\mu_1,\dotsc,\mu_k)$ be a partition of $n$ with distinct odd parts and $i$ be an integer.
    Let $M=\prod_{j=1}^k \mu_j$.
    Then $w_\mu^i$ is conjugate to $w_\mu$ in $A_n$ if and only if $\tlegendre iM=1$.
\end{theorem}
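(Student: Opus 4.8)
The plan is to reduce the conjugacy question to a counting/parity statement and then evaluate that parity via quadratic residues, recovering the Jacobi symbol. First I would recall the basic structure: for a partition $\mu$ with distinct odd parts, the permutations of cycle type $\mu$ split into two $A_n$-classes precisely because the centralizer $Z_{S_n}(w_\mu)$ (of order $M=\prod_j\mu_j$, cyclic in each cycle-factor) lies entirely inside $A_n$. Two elements of cycle type $\mu$ are $A_n$-conjugate iff the $S_n$-element conjugating one to the other can be taken in $A_n$. So the task is: given that $w_\mu^i$ again has cycle type $\mu$ (which holds as soon as $\gcd(i,M)=1$; I should first observe that $\tlegendre iM=1$ forces $\gcd(i,\mu_j)=1$ for all $j$, since a prime dividing both would make the corresponding Legendre symbol $0$), decide whether some $\sigma\in S_n$ with $\sigma w_\mu\sigma^{-1}=w_\mu^i$ lies in $A_n$, equivalently whether \emph{every} such $\sigma$ does (they differ by elements of $Z_{S_n}(w_\mu)\subseteq A_n$), equivalently compute $\sgn(\sigma)$.

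Next I would compute $\sgn(\sigma)$ cycle-by-cycle. On a single $\mu_j$-cycle $c=(x_0\,x_1\,\dots\,x_{\mu_j-1})$, raising to the $i$-th power gives the $\mu_j$-cycle $c^i=(x_0\,x_i\,x_{2i}\,\dots)$ (indices mod $\mu_j$), and the permutation $\sigma_j$ sending $x_t\mapsto x_{ti \bmod \mu_j}$ conjugates $c$ to $c^i$; so the global $\sigma$ is the product of these $\sigma_j$ and $\sgn(\sigma)=\prod_j \sgn(\sigma_j)$. The key classical fact I would invoke (or prove in a line) is that the multiplication-by-$i$ map $t\mapsto ti$ on $\mathbb Z/\mu_j\mathbb Z$, viewed as a permutation of the $\mu_j$ points of the cycle, has sign equal to the Legendre/Jacobi symbol $\tlegendre{i}{\mu_j}$ when $\mu_j$ is odd and $\gcd(i,\mu_j)=1$. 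This is the ``Zolotarev lemma'': the sign of multiplication by a unit $i$ modulo an odd number is exactly the Jacobi symbol $\tlegendre{i}{\mu_j}$. Multiplying over $j$, $\sgn(\sigma)=\prod_{j=1}^k \tlegendre{i}{\mu_j}=\tlegendre{i}{M}$. Hence $w_\mu^i$ is $A_n$-conjugate to $w_\mu$ iff $\sgn(\sigma)=1$ iff $\tlegendre iM=1$, which is the claim.

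I expect the main obstacle to be handling Zolotarev's lemma cleanly in the generality needed here—in particular when $\mu_j$ is not prime but a prime power or a product of distinct odd primes, so that the Jacobi symbol rather than the Legendre symbol appears. One safe route is to first prove it for $\mu_j$ prime (where the sign of $t\mapsto ti$ on $\mathbb Z/p\mathbb Z$ is classically $\tlegendre ip$, e.g.\ by noting the fixed point $0$ and pairing $t$ with $-t$, or by the standard Gauss-lemma style argument), then extend to odd $n$ by writing $\mathbb Z/n\mathbb Z\cong\prod \mathbb Z/p_\ell^{e_\ell}\mathbb Z$ via CRT and tracking how the sign behaves under this decomposition, using that multiplication by $i$ on $\mathbb Z/p^e\mathbb Z$ has sign $\tlegendre ip^{\,e}$; the parity bookkeeping across the CRT isomorphism (the permutation of a product set induced by a product of permutations) is elementary but must be done carefully. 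A small secondary point is to confirm at the outset that the hypothesis $\tlegendre iM=1$ indeed entails $w_\mu^i$ has cycle type $\mu$, so that ``conjugate to $w_\mu$'' is even meaningful; this follows since $\tlegendre iM\neq0$ forces $i$ coprime to each $\mu_j$, hence $w_\mu^i$ has the same cycle type as $w_\mu$.
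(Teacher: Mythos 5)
Your proposal is correct and follows essentially the same route as the paper: reduce $A_n$-conjugacy of $w_\mu^i$ to $w_\mu$ to the sign of the explicit conjugator given by multiplication by $i$ on each cycle, then evaluate that sign as the Jacobi symbol $\tlegendre{i}{M}$ via Zolotarev's lemma, proved by the prime-power case plus CRT. The paper's proof does exactly this (working on $Z_{\mu_1}\coprod\dotsb\coprod Z_{\mu_k}$ and computing the cycle type of $a\mapsto ia$ on $Z_{p^e}$ for a generator $i$ of the unit group), so no substantive difference remains.
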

\begin{proof}
    Let $Z_k$ denote the ring of integers modulo $k$.
    Consider the disjoint union $Z = Z_{\mu_1}\coprod \dotsb \coprod Z_{\mu_k}$, which has cardinality $n$.
    The map $w:Z\to Z$ given by $w(a)=a+1$ for $a\in Z_{\mu_j}$ for any $1\leq j\leq k$, is a permutation of $Z$ with cycle type $\mu$.
    The map $w^i:Z\to Z$ is given by $w^i(a)=a+i$ for any $a\in Z_{\mu_j}$.
    Define $\sigma_i:Z\to Z$ by $\sigma_i(a)=ia$ for any $a\in Z_{\mu_j}$.
    Then $\sigma_i\circ w(a) = i(a+1) = w^i\circ \sigma_i(a)$ for any $a\in Z_{\mu_j}$.
    Thus, $\sigma_i w \sigma_i^{-1} = w^i$.
    It follows that $w^i$ is conjugate to $w$ in $A_n$ if and only if $\sigma_i$ is an even permutation of $Z$.
    Since the sign of a permutation on a disjoint union of invariant sets is the product of the signs of the permutations on the invariant sets, it suffices to show that, for any positive integer $m$, the sign of the permutation $\sigma_i:a\mapsto ia$ on $Z_m$ is $\tlegendre{i}{m}$ for any $i$ such that $i$ is coprime to $m$.
    
    For any odd prime $p$ and any positive integer $e$, the units group $Z_{p^e}^*$ of $Z_{p^e}$ is a cyclic group of order $p^{e-1}(p-1)$.
    Let $i$ be a generator of $Z_{p^e}^*$.
    Then the cycles of $\sigma_i:a\mapsto ia$ on $Z_{p^e}$ are the $Z_{p^e}^*$-orbits.
    These are the sets $p^{j}Z_{p^e}-p^{j+1}Z_{p^e}$ for $j=0,\dotsc,e$.
    Thus, the cycle type of $\sigma_i$ on $Z_{p^e}$ is $(p^{e-1}(p-1),p^{e-2}(p-1),\dotsc,p-1,1)$.
    So $\sigma_i$ has $e$ even cycles.
    Since $\tlegendre ip=-1$, $\sigma_i$ has sign ${(-1)}^e=\tlegendre{i}{p^e}$.
    Since the map $i\mapsto \sgn\sigma_i$ and the Jacobi symbol $\tlegendre{i}{p^e}$ are both multiplicative in $i$, it follows that for any $i$ coprime to $p$, the sign of $\sigma_i$ on $Z_{p^e}$ is $\tlegendre{i}{p^e}$.

    Let $m$ have prime factorization $p_1^{e_1}\dotsm p_k^{e_r}$.
    Then $Z_m$ is the direct product of the cyclic groups $Z_{p_1^{e_1}},\dotsc,Z_{p_k^{e_r}}$.
    For each $1\leq j\leq r$ let $\sigma_i^j$ denote the map that multiplies the $j$th factor in the decomposition $Z_m=Z_{p_1^{e_1}}\times\dotsb\times Z_{p_k^{e_r}}$ by $i$ and fixes the other factors.
    Then $\sigma_i = \sigma_i^1\circ\dotsb\circ \sigma_i^r$.
    Therefore, $\sgn(\sigma_i)=\prod_{j=1}^r \sgn(\sigma_i^j) = \prod_{j=1}^r \tlegendre i{p_j^{e_j}} = \tlegendre{i}{m}$.
\end{proof}

\section{Computation of Multiplicities}\label{sec:main}
The only non-trivial case for Theorem~\ref{theorem:A} is the second assertion, when $\mu$ has distinct odd parts and $\lambda=\phi(\mu)$.
In this case, let $d^\lambda_{\mu,i} = a^{\lambda^+}_{\mu,i}-a^{\lambda^-}_{\mu,i}$.
Then $a^{\lambda^\pm}_{\mu,i} = \tfrac 12(a^\lambda_{\mu,i}\pm d^\lambda_{\mu,i})$.
Let $g(p) = \sum_{l=0}^{p-1} \legendre{l}{p} \zeta_{p}^{l}$ denote the quadratic Gauss sum.
The value of $d^\lambda_{\mu,i}$ is given by the following theorem.
\begin{theorem}\label{theorem:main}
    Let $\mu=(\mu_1,\dotsc,\mu_k)\vdash n$ with distinct odd parts, and $i$ be an integer.
    Let $M=\prod_{j=1}^k \mu_j$ and $m=\mathrm{lcm}(\mu_1,\dotsc,\mu_k)$.
    Write $M = \prod_{j=1}^r p_j^{e_j}$, where $p_1,\dotsc,p_r$ are distinct primes, $e_1,\dotsc,e_s$ are odd, and $e_{s+1},\dotsc,e_r$ are even.
    Suppose that $m=\prod_{j=1}^r p_j^{f_j}$ and $i\equiv u_j p_j^{d_j}\mod p_j^{f_j}$ with $u_j$ coprime to $p_j$ and $0\leq d_j\leq f_j$.
    Then $d^{\phi(\mu)}_{\mu,i}\neq 0$ if and only if $d_j=f_j-1$ for $j=1,\dotsc,s$, and $d_j\in \{f_j-1, f_j\}$ for $j=s+1,\dotsc,r$.
    When this happens, we have
    \begin{displaymath}\label{eq:main}
        d^{\phi(\mu)}_{\mu,i} = \frac{\sqrt{\epsilon_\mu M}}{m}\prod_{j=1}^s p_j^{f_j-1}\legendre{u_j m/p_j^{f_j}}{p_j}g(p_j)\prod_{j=s+1}^r (-p_j^{f_j-1})\prod_{d_j=f_j}(1-p_j),
    \end{displaymath}
    where $\epsilon_\mu={(-1)}^{\sum_{j=1}^k (\mu_j-1)/2}$.
    In particular,
    \begin{displaymath}
        |d_{\mu,i}^{\phi(\mu)}| = \sqrt{\frac{M}{\prod_{j=1}^s p_j}}\frac{\prod_{d_j=f_j}(p_j-1)}{\prod_{j=s+1}^r p_j}.
    \end{displaymath}
\end{theorem}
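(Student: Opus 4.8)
The plan is to reduce the whole statement to a single exponential sum and then evaluate that sum one prime at a time. Write $\lambda=\phi(\mu)$, and recall that $w_\mu$ has order $m$, so $\zeta_m^i$ makes sense as an eigenvalue. Since $a^{\lambda^\pm}_{\mu,i}$ is the multiplicity of $\zeta_m^i$ as an eigenvalue of $w_\mu$ in $V_\lambda^\pm$, applying~\eqref{eq:multiplicity} to $\chi_\lambda^+$ and $\chi_\lambda^-$ and subtracting gives
\[
    d^\lambda_{\mu,i}=\frac1m\sum_{t=0}^{m-1}\bigl(\chi_\lambda^+(w_\mu^t)-\chi_\lambda^-(w_\mu^t)\bigr)\zeta_m^{-it}.
\]
The first step is to evaluate the integrand. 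By Theorem~\ref{theorem:plusminus}, $\chi_\lambda^+$ and $\chi_\lambda^-$ agree on every conjugacy class of $A_n$ except the two classes of elements of cycle type $\mu$ (we use that $\phi$ is a bijection, so $\lambda\neq\phi(\nu)$ for $\nu\neq\mu$). Now $w_\mu^t$ has cycle type $\mu$ exactly when $\gcd(t,m)=1$; since the primes dividing $m$ are precisely those dividing $M$, the integrand vanishes whenever $\tlegendre{t}{M}=0$. When $\gcd(t,m)=1$, Theorem~\ref{theorem:power_class} places $w_\mu^t$ in the $A_n$-class of $w_\mu$ if $\tlegendre{t}{M}=1$ and in that of $ww_\mu w^{-1}$ if $\tlegendre{t}{M}=-1$; feeding this into Theorem~\ref{theorem:plusminus} yields $\chi_\lambda^+(w_\mu^t)-\chi_\lambda^-(w_\mu^t)=\tlegendre{t}{M}\sqrt{\epsilon_\mu M}$ for all $t$, so that
\[
    d^\lambda_{\mu,i}=\frac{\sqrt{\epsilon_\mu M}}{m}\sum_{t=0}^{m-1}\legendre{t}{M}\zeta_m^{-it}.
\]

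Next I would factor the sum $S:=\sum_{t=0}^{m-1}\tlegendre{t}{M}\zeta_m^{-it}$ through the Chinese remainder isomorphism $\mathbb Z/m\cong\prod_{j=1}^r\mathbb Z/p_j^{f_j}$, writing $t\leftrightarrow(t_1,\dots,t_r)$. The additive character $t\mapsto\zeta_m^{-it}$ factors as $\prod_j\zeta_{p_j^{f_j}}^{-c_j t_j}$ with $c_j\equiv i\,(m/p_j^{f_j})^{-1}\pmod{p_j^{f_j}}$, and because $e_j$ is odd for $j\le s$ and even for $j>s$ one has $\tlegendre{t}{M}=\prod_{j\le s}\tlegendre{t_j}{p_j}\cdot\prod_{j>s}\mathbf{1}[p_j\nmid t_j]$. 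Hence $S=\prod_{j=1}^{s}S_j\cdot\prod_{j=s+1}^{r}S_j'$, where $S_j=\sum_{a}\tlegendre{a}{p_j}\zeta_{p_j^{f_j}}^{-c_j a}$ and $S_j'=\sum_{p_j\nmid a}\zeta_{p_j^{f_j}}^{-c_j a}$, the sums running over $a\in\mathbb Z/p_j^{f_j}$.

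Each local factor is then computed by writing $a=a_0+p_j a_1$ and summing over $a_1$ first: that inner sum is a complete exponential sum on $\mathbb Z/p_j^{f_j-1}$, equal to $p_j^{f_j-1}$ if $p_j^{f_j-1}\mid c_j$ and $0$ otherwise, so non-vanishing forces $d_j\ge f_j-1$. For $j\le s$: when $d_j=f_j$ the residual sum is $p_j^{f_j-1}\sum_{a_0}\tlegendre{a_0}{p_j}=0$, while when $d_j=f_j-1$ one is left with $p_j^{f_j-1}$ times a quadratic Gauss sum mod $p_j$ whose exponent has unit part $c_j p_j^{-(f_j-1)}\equiv u_j(m/p_j^{f_j})^{-1}\pmod{p_j}$; rescaling the summation variable turns this into $p_j^{f_j-1}\tlegendre{u_j m/p_j^{f_j}}{p_j}\,g(p_j)$, up to a factor $\tlegendre{-1}{p_j}$ that is absorbed into the choice of branch of $\sqrt{\epsilon_\mu M}$ (the branch for which the whole product is real). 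For $j>s$ the analogous geometric-series computation gives $S_j'=0$ if $d_j<f_j-1$, $S_j'=-p_j^{f_j-1}$ if $d_j=f_j-1$, and $S_j'=p_j^{f_j-1}(p_j-1)$ if $d_j=f_j$, which is exactly $(-p_j^{f_j-1})\prod_{d_j=f_j}(1-p_j)$. Multiplying all the factors and dividing by $m$ produces the claimed formula, and $d^{\phi(\mu)}_{\mu,i}\neq 0$ precisely when every factor is nonzero, i.e. $d_j=f_j-1$ for $j\le s$ and $d_j\in\{f_j-1,f_j\}$ for $j>s$. The absolute value follows by taking moduli, using $\bigl|\sqrt{\epsilon_\mu M}\bigr|=\sqrt M$, $|g(p_j)|=\sqrt{p_j}$, and $m=\prod_j p_j^{f_j}$, and telescoping to $\bigl|d^{\phi(\mu)}_{\mu,i}\bigr|=\sqrt{M/\prod_{j\le s}p_j}\cdot\frac{\prod_{d_j=f_j}(p_j-1)}{\prod_{j>s}p_j}$.

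I expect the delicate part to be the local Gauss-sum step: keeping track of the Chinese-remainder unit twist so that the Legendre symbol emerges with argument exactly $u_j m/p_j^{f_j}$, maintaining a consistent relationship between the sign of $\sqrt{\epsilon_\mu M}$ and the Gauss sums $g(p_j)$ (equivalently, checking that the resulting $d^{\phi(\mu)}_{\mu,i}$ is real, which it must be since it is a difference of multiplicities), and handling the boundary cases $d_j\in\{f_j-1,f_j\}$ versus $d_j<f_j-1$ uniformly for both $S_j$ and $S_j'$. Everything else is a routine combination of Frobenius's character formulas (Theorems~\ref{theorem:plusminus} and~\ref{theorem:power_class}) with elementary exponential sums.
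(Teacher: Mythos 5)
Your proposal is correct and follows essentially the same route as the paper: reduce $d^{\phi(\mu)}_{\mu,i}$ via Theorems~\ref{theorem:plusminus} and~\ref{theorem:power_class} to the sum $\frac{\sqrt{\epsilon_\mu M}}{m}\sum_t\tlegendre{t}{M}\zeta_m^{-it}$, factor it through the Chinese remainder theorem, and evaluate the local quadratic Gauss sums and Ramanujan sums prime by prime. Your explicit bookkeeping of the $\tlegendre{-1}{p_j}$ twist is, if anything, slightly more careful than the paper's own treatment of that sign.
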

\begin{example}
    Consider $\mu=(15,9,3)$.
    Then $\prod_{j=1}^3 \mu_j = 405 = 5^1\times 3^4$ and $\mathrm{lcm}(15,9,3)=45=5^1\times 3^2$.
    Theorem~\ref{theorem:main} allows us to easily compute
    \begin{displaymath}
        d^{\phi(\mu)}_{\mu,0} = d^{\phi(\mu)}_{\mu,1} = d^{\phi(\mu)}_{\mu,15} =0, |d^{\phi(\mu)}_{\mu,3}| = 3, |d^{\phi(\mu)}_{\mu,9}| = 6.
    \end{displaymath}
\end{example}
\begin{proof}
    Let $\delta_\lambda = \chi_\lambda^+-\chi_\lambda^-$.
    Fix $w\in S_n\setminus A_n$.
    By Theorem~\ref{theorem:plusminus}, we have
    \begin{displaymath}
        \delta_\lambda(w_\mu^i) =
        \begin{cases}
            \sqrt{\epsilon_\mu z_\mu} & \text{ if $w_\mu^i\sim w$,}\\
            -\sqrt{\epsilon_\mu z_\mu} & \text{ if $w_\mu^i\sim ww_\mu w^{-1}$,}\\
            0 & \text{ otherwise.}
        \end{cases}
    \end{displaymath}
    The permutation $w_\mu^i$ has cycle type $\mu$ if and only if $(m,i)=1$.
    By Theorem~\ref{theorem:power_class}, $w_\mu^i$ is conjugate to $w_\mu$ if $\tlegendre iM=1$ and is conjugate to $ww_\mu w^{-1}$ if $\tlegendre iM=-1$.
    Using~\eqref{eq:multiplicity}, we have
    \begin{align*}
        d^\lambda_{\mu,i} & = \frac 1{m}\sum_{l=0}^{m-1} \delta_\lambda(w_\mu^l) \zeta_{m}^{-il}\\
        & = \frac{\sqrt{\epsilon_\mu z_\mu}}{m}\left(\sum_{w_\mu^l\sim w_\mu}\zeta_{m}^{-il} -  \sum_{w_\mu^l\sim ww_\mu w^{-1}}\zeta_{m}^{-il}\right)\\
        & = \frac{\sqrt{\epsilon_\mu M}}{m}\sum_{l=0}^{m-1} \legendre{l}{M}\zeta_{m}^{-il}.
    \end{align*}
    Let $n_0 = \prod_{j=1}^s p_j$. Then
    \begin{displaymath}
        \legendre{l}{M} = 
        \begin{cases}
            \legendre{l}{n_0} & \text{if } (l,M)=1,\\
            0 & \text{otherwise.}
        \end{cases}
    \end{displaymath}
    Let $h_j = m/p_j^{f_j}$.
    Since the greatest common divisor of $h_1,\dotsc,h_r$ is $1$, there exist integers $c_1,\dotsc,c_r$ such that $\sum_{t=1}^r c_t h_t = 1$.
    Note that $c_t$ is a unit modulo $p_k$ for each $t=1,\dotsc,r$.
    We have
    \begin{displaymath}
        \legendre{l}{n_0} = \legendre{\sum_{t=1}^r lc_t h_t}{n_0} = \prod_{j=1}^s \legendre{\sum_{t=1}^r lc_t h_t}{p_j} = \prod_{j=1}^s \legendre{lc_j h_j}{p_j},
    \end{displaymath}
    since $\sum_{t=1}^r lc_t h_t\equiv lc_j h_j\mod p_j$.
    Similarly,
    \begin{displaymath}
        \zeta_m^{il} = \zeta_m^{i\sum_{j=1}^r lc_j h_j} = \prod_{j=1}^r \zeta_{p_j^{f_j}}^{ilc_j}.
    \end{displaymath}
    Therefore,
    \begin{align}
        \nonumber
        \sum_{l=0}^{m-1} \legendre{l}{M}\zeta_{m}^{-il}
        & = \sum_{l\in Z_m^*}\legendre{l}{n_0}\zeta_{m}^{il}\\
        \nonumber
        & = \sum_{l\in Z_m^*}\prod_{j=1}^s \legendre{lc_j h_j}{p_j}\prod_{j=1}^r \zeta_{p_j^{f_j}}^{ilc_j}\\
        \nonumber
        & = \prod_{j=1}^s \legendre{h_j}{p_j}\sum_{l\in Z_m^*}\prod_{j=1}^s \legendre{lc_j}{p_j}\prod_{j=1}^r \zeta_{p_j^{f_j}}^{ilc_j}\\
        \nonumber
        & = \prod_{j=1}^s \legendre{h_j}{p_j}\sum_{(l_1,\dotsc,l_r)\in Z_{p_1^{f_1}}^*\times\dotsb\times Z_{p_r^{f_r}}^*}\prod_{j=1}^s \legendre{l_j}{p_j} \zeta_{p_j^{f_j}}^{il_j}\prod_{j=s+1}^r \zeta_{p_j^{f_j}}^{il_j}\\
        \label{eq:sum}
        & = \prod_{j=1}^s \legendre{h_j}{p_j}\prod_{j=1}^s\sum_{l_j\in Z_{p_j^{f_j}}^*} \legendre{l_j}{p_j} \zeta_{p_j^{f_j}}^{il_j}\prod_{j=s+1}^r\sum_{l_j\in Z_{p_j^{f_j}}^*}\zeta_{p_j^{f_j}}^{il_j}.
    \end{align}
    The sums in~\eqref{eq:sum} are evaluated in the following lemma, which is easy to prove.
    \begin{lemma}
        Let $p$ be odd prime $p$ and let $f\geq 1$ be an integer.
        Suppose that $i\equiv up^d \mod p^f$, where $u$ is coprime to $p$, and $0\leq d\leq f$.
        Then
        \begin{displaymath}
            \sum_{l\in Z_{p^f}^*} \legendre{l}{p} \zeta_{p^f}^{il} = 
            \begin{cases}
                p^d\legendre{u}{p}g(p) & \text{ if } d=f-1,\\
                0 & \text{ otherwise.}
            \end{cases}
        \end{displaymath}
        Also,
        \begin{displaymath}
            \sum_{l\in Z_{p^f}^*} \zeta_{p^f}^{il} = 
            \begin{cases}
                p^f-p^{f-1} & \text{ if } d=f,\\
                -p^{f-1} & \text{ if } d=f-1,\\
                0 & \text{ otherwise.}
            \end{cases}
        \end{displaymath}
    \end{lemma}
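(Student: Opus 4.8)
The plan is to evaluate each of the two sums directly, by decomposing the index set $Z_{p^f}^*$ according to residues modulo $p$ and recognizing the resulting pieces as elementary geometric sums or as (twisted) quadratic Gauss sums.

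First I would dispose of the second sum. Writing it as the difference of a sum over all of $Z_{p^f}$ and a sum over the multiples of $p$,
\begin{displaymath}
    \sum_{l\in Z_{p^f}^*}\zeta_{p^f}^{il} = \sum_{l=0}^{p^f-1}\zeta_{p^f}^{il} - \sum_{k=0}^{p^{f-1}-1}\zeta_{p^{f-1}}^{ik},
\end{displaymath}
the first term equals $p^f$ exactly when $p^f\mid i$, that is, when $d=f$, and vanishes otherwise, while the second equals $p^{f-1}$ exactly when $p^{f-1}\mid i$, that is, when $d\geq f-1$, and vanishes otherwise. Separating the three cases $d=f$, $d=f-1$, and $d<f-1$ then gives the stated values $p^f-p^{f-1}$, $-p^{f-1}$, and $0$.

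For the first sum I would group the units of $Z_{p^f}$ by their reduction $a\in Z_p^*$ modulo $p$; the fibre over $a$ is $\{a+pk : 0\leq k<p^{f-1}\}$, and the Legendre symbol $\legendre{l}{p}$ depends only on $a$, so
\begin{displaymath}
    \sum_{l\in Z_{p^f}^*}\legendre{l}{p}\zeta_{p^f}^{il}
    = \left(\sum_{k=0}^{p^{f-1}-1}\zeta_{p^{f-1}}^{ik}\right)\sum_{a\in Z_p^*}\legendre{a}{p}\zeta_{p^f}^{ia}.
\end{displaymath}
The geometric factor vanishes unless $d\geq f-1$, which already disposes of the case $d<f-1$. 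When $d=f$ one has $\zeta_{p^f}^{ia}=1$, and $\sum_{a\in Z_p^*}\legendre{a}{p}=0$ since there are equally many residues and non-residues, so the sum is $0$. When $d=f-1$, writing $i\equiv up^{f-1}\pmod{p^f}$ gives $\zeta_{p^f}^{ia}=\zeta_p^{ua}$, and the change of variable $b=ua$ together with multiplicativity of the Legendre symbol turns $\sum_{a\in Z_p^*}\legendre{a}{p}\zeta_p^{ua}$ into $\legendre{u}{p}\sum_{b\in Z_p^*}\legendre{b}{p}\zeta_p^{b}=\legendre{u}{p}g(p)$, while the geometric factor contributes $p^{f-1}=p^d$. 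This produces $p^d\legendre{u}{p}g(p)$, as required.

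I do not expect any genuine obstacle here; the lemma really is elementary. The only points needing a little care are tracking precisely when the geometric sum over multiples of $p$ is non-zero (the dichotomy $d\geq f-1$ versus $d<f-1$), and carrying out the reduction of the twisted sum $\sum_{a}\legendre{a}{p}\zeta_p^{ua}$ to $\legendre{u}{p}g(p)$ via the substitution $b=ua$.
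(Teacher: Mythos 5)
Your argument is correct and complete. The paper itself gives no proof of this lemma (it is stated inline with the remark that it ``is easy to prove''), so there is nothing to compare against; your computation is exactly the standard one that fills this gap. Both halves check out: the second sum is the familiar Ramanujan-sum evaluation via $\sum_{l\in Z_{p^f}^*} = \sum_{l\in Z_{p^f}} - \sum_{p\mid l}$, and for the first sum the factorization over the fibres $\{a+pk\}$ of reduction mod $p$ is legitimate because $\legendre{l}{p}$ depends only on $a$ and $\zeta_{p^f}^{i(a+pk)}=\zeta_{p^f}^{ia}\zeta_{p^{f-1}}^{ik}$; the three cases $d<f-1$, $d=f$, $d=f-1$ are then handled correctly, including the substitution $b=ua$ (using $\legendre{u^{-1}}{p}=\legendre{u}{p}$) that produces $\legendre{u}{p}g(p)$ and the identification of the geometric factor with $p^{f-1}=p^d$.
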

Evaluating $d^\lambda_{\mu,i}$ using~\eqref{eq:sum} and the above lemma the formula for $d^{\phi(\mu)}_{\mu,i}$ is obtained.
To get the formula for $|d^{\phi(\mu)}_{\mu,i}|$ we use the fact that $|g(p)|=\sqrt p$. 
\end{proof}
\begin{corollary}\label{corollary:upper-bound}
    For every partition $\mu=(\mu_1,\dotsc,\mu_k)$ of an integer $n>1$ with distinct odd parts, let $M=\prod_{j=1}^k \mu_j$.
    Then $|d^{\phi(\mu)}_{\mu,i}| < \sqrt M$ for every integer $i$.
\end{corollary}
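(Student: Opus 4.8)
The plan is to read the bound off directly from the closed formula for $|d^{\phi(\mu)}_{\mu,i}|$ in Theorem~\ref{theorem:main}. First I would dispose of the trivial case: if $d^{\phi(\mu)}_{\mu,i}=0$ then $|d^{\phi(\mu)}_{\mu,i}|=0<\sqrt M$, because $M$ is a product of distinct odd parts with $n>1$, so $M\geq 3$. Hence I may assume $d^{\phi(\mu)}_{\mu,i}\neq 0$, so that Theorem~\ref{theorem:main} applies and
\begin{displaymath}
    |d^{\phi(\mu)}_{\mu,i}| = \sqrt M\cdot\frac{1}{\sqrt{\prod_{j=1}^s p_j}}\cdot\frac{\prod_{d_j=f_j}(p_j-1)}{\prod_{j=s+1}^r p_j}.
\end{displaymath}
It therefore suffices to show that the product of the last two factors is strictly less than $1$.

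The key observation is that the hypothesis $d^{\phi(\mu)}_{\mu,i}\neq 0$ forces, by Theorem~\ref{theorem:main}, $d_j=f_j-1$ for every $j=1,\dotsc,s$; consequently every index $j$ with $d_j=f_j$ lies in $\{s+1,\dotsc,r\}$, so that $\prod_{d_j=f_j}(p_j-1)$ is a product of factors $p_j-1$ over some subset $S'\subseteq\{s+1,\dotsc,r\}$. Using $p_j-1<p_j$ together with $\prod_{j\in S'}p_j\leq\prod_{j=s+1}^r p_j$, one obtains $\prod_{d_j=f_j}(p_j-1)\leq\prod_{j=s+1}^r p_j$, and this inequality is strict whenever $\{s+1,\dotsc,r\}$ is nonempty, i.e. whenever $s<r$.

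Finally I would combine the two factors. If $s<r$, then $\prod_{d_j=f_j}(p_j-1)/\prod_{j=s+1}^r p_j<1$ by the previous step, while $1/\sqrt{\prod_{j=1}^s p_j}\leq 1$, so their product is $<1$. If $s=r$, then the second ratio is an empty product equal to $1$, and $M>1$ forces $r\geq 1$, hence $s=r\geq 1$; since $M$ is odd, each $p_j$ is an odd prime, so $\prod_{j=1}^s p_j\geq 3$ and $1/\sqrt{\prod_{j=1}^s p_j}<1$. In every case $|d^{\phi(\mu)}_{\mu,i}|<\sqrt M$. There is no serious obstacle here: the arithmetic content is already packaged in Theorem~\ref{theorem:main}, and the only care needed is in the boundary cases $d^{\phi(\mu)}_{\mu,i}=0$ and $s=r$.
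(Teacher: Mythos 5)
Your proof is correct and is exactly the argument the paper leaves implicit: the corollary is stated without proof as an immediate consequence of the formula for $|d_{\mu,i}^{\phi(\mu)}|$ in Theorem~\ref{theorem:main}, and your verification that the extra factor is strictly less than $1$ (using that $d_j=f_j-1$ forces the indices with $d_j=f_j$ into $\{s+1,\dotsc,r\}$, and that $r\geq 1$ since $M\geq 3$) is the right bookkeeping, including the boundary cases.
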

\begin{corollary}\label{corollary:zero-and-one}
    For every partition $\mu=(\mu_1,\dotsc,\mu_k)$ with distinct odd parts, let $M=\prod_{j=1}^k \mu_j$ and $m=\mathrm{lcm}(\mu_1,\dotsc,\mu_k)$.
    Suppose $M = \prod_{j=1}^k p_j^{e_j}$, where $p_1,\dotsc,p_k$ are distinct primes, and $e_1,\dotsc,e_s$ are odd, and $e_{s+1},\dotsc,e_k$ are even.
    \begin{enumerate}[1.]
        \item  $d^{\phi(\mu)}_{\mu,0}\neq 0$ if and only if $M$ is a square.
        When this happens,
        \begin{displaymath}
            |d^{\phi(\mu)}_{\mu,0}| = \sqrt M\prod_{p|M} (1-p^{-1}),
        \end{displaymath}
        the product running over primes dividing $M$.
        \item $d^{\phi(\mu)}_{\mu,1}\neq 0$ if and only if $m$ is square-free.
        When this happens,
        \begin{displaymath}
            |d^{\phi(\mu)}_{\mu,1}| = \sqrt{\frac{M}{\prod_{j=1}^s p_j}}\cdot\frac 1{\prod_{j=s+1}^r p_j}.
        \end{displaymath}
    \end{enumerate}
\end{corollary}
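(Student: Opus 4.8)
The plan is to derive both parts by specializing Theorem~\ref{theorem:main} to $i=0$ and $i=1$; the only real work is to unwind the convention $i\equiv u_jp_j^{d_j}\bmod p_j^{f_j}$ in these two cases. First I would record the elementary observation that $p_j\mid M$ forces $p_j\mid m$, hence $f_j\geq 1$ for every $j$; this is what keeps the exponent bookkeeping finite and will be used repeatedly.

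For part (1), take $i=0$. Since $p_j^{f_j}\mid 0$, the congruence $0\equiv u_jp_j^{d_j}\bmod p_j^{f_j}$ forces $d_j=f_j$ for all $j$. Feeding this into the non-vanishing criterion of Theorem~\ref{theorem:main} --- which demands $d_j=f_j-1$ for $j\leq s$ --- we see $d^{\phi(\mu)}_{\mu,0}\neq 0$ exactly when there is no such $j$, i.e.\ $s=0$, which is precisely the statement that all $e_j$ are even, i.e.\ $M$ is a perfect square. When $s=0$ every $d_j$ equals $f_j$, so in the absolute-value formula of Theorem~\ref{theorem:main} the factor $\prod_{j=1}^s p_j$ is the empty product $1$, the product $\prod_{d_j=f_j}(p_j-1)$ runs over all $j$, and $\prod_{j=s+1}^r p_j=\prod_{j=1}^r p_j$; this collapses to $\sqrt M\prod_{j=1}^r(1-p_j^{-1})=\sqrt M\prod_{p\mid M}(1-p^{-1})$, as required.

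For part (2), take $i=1$. Since $1$ is a unit modulo each $p_j$, we must have $d_j=0$ (and $u_j\equiv 1$) for every $j$. The non-vanishing criterion of Theorem~\ref{theorem:main} then reads $0=f_j-1$ for $j\leq s$ and $0\in\{f_j-1,f_j\}$ for $j>s$; using $f_j\geq 1$ this is equivalent to $f_j=1$ for all $j$, i.e.\ $m=p_1\cdots p_r$ is square-free. When this holds, $d_j=0=f_j-1\neq f_j$ for every $j$, so $\prod_{d_j=f_j}(p_j-1)$ is the empty product $1$, and the formula of Theorem~\ref{theorem:main} reduces to $\sqrt{M/\prod_{j=1}^s p_j}\cdot\bigl(\prod_{j=s+1}^r p_j\bigr)^{-1}$.

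There is essentially no obstacle here beyond care with empty products and with the boundary convention $d_j=f_j\Leftrightarrow p_j^{f_j}\mid i$; all substantive content is already contained in Theorem~\ref{theorem:main}.
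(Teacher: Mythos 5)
Your derivation is correct and is exactly the specialization of Theorem~\ref{theorem:main} that the paper intends (the paper states the corollary without proof, treating it as immediate). Your handling of the conventions $i=0\Rightarrow d_j=f_j$ and $i=1\Rightarrow d_j=0$, together with $f_j\geq 1$, is precisely the bookkeeping needed.
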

\section{Existence of Invariant Vectors for Alternating Groups}\label{sec:cyclic-an}
In this section, we will prove Theorem~C.
We first consider the case where $w$ is a large cycle in the alternating group, and then deduce the general case from it.
\subsection{Large Cycles}\label{sec:large-cycles}
For representations $V$ and $W$ of a group $G$, say that $V\geq W$ if $V$ contains a subrepresentation isomorphic to $W$.
\begin{lemma}\label{lemma:cycle}
    Suppose $n\neq 3$ is odd. Let $\mu=(n)$.
    Then for every integer $r$, $|d_{\mu,r}^{\phi(\mu)}| < a^{\phi(\mu)}_{\mu,r}$.
\end{lemma}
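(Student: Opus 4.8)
The plan is to pit a combinatorial lower bound for $a^{\phi(\mu)}_{\mu,r}$ against the exact value of the bias $d^{\phi(\mu)}_{\mu,r}$ furnished by Theorem~\ref{theorem:main}. Write $n=2k+1$, so that $\phi((n))$ is the balanced hook $\lambda=(k+1,1^{k})$ and $M=m=n$. When $d^{\phi(\mu)}_{\mu,r}=0$ the inequality reduces to $a^{\phi(\mu)}_{\mu,r}>0$, which holds by Swanson's theorem (Theorem~\ref{theorem:swanson}): for $n\ge5$ the hook $(k+1,1^{k})$ is none of the exceptional shapes $(2,2)$, $(2,2,2)$, $(3,3)$, $(2,1^{n-2})$, $(n)$, so every $\zeta_{n}^{r}$ already occurs as an eigenvalue of the $n$-cycle in $V_{\phi(\mu)}$. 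Hence I would assume from now on that $d^{\phi(\mu)}_{\mu,r}\ne0$.

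First I would set up a combinatorial model for $a^{\phi(\mu)}_{\mu,r}$. A standard Young tableau $T$ of hook shape $(k+1,1^{k})$ is determined by the $k$-element set $L$ of entries occupying the first-column cells below the corner; checking when $i+1$ lies in a strictly lower row than $i$ shows that $\Des(T)=\{\ell-1:\ell\in L\}$, so $\maj(T)=\sum_{\ell\in L}\ell-k$. Since Stembridge's index specializes to the major index when $\mu=(n)$, formula~\eqref{eq:multi-major} together with the substitution $S=\{\ell-1:\ell\in L\}$ gives
\begin{displaymath}
a^{\phi(\mu)}_{\mu,r}=\#\bigl\{\,S\subseteq\{1,\dots,2k\}:|S|=k,\ \textstyle\sum_{s\in S}s\equiv r\pmod{2k+1}\,\bigr\}.
\end{displaymath}
The generating polynomial of these subset sums is $q^{\binom{k+1}{2}}\binom{2k}{k}_{q}$, so a root-of-unity filter writes $a^{\phi(\mu)}_{\mu,r}$ as $\tfrac{1}{2k+1}\sum_{t=0}^{2k}\zeta^{-(r-\binom{k+1}{2})t}\binom{2k}{k}_{\zeta^{t}}$ with $\zeta=e^{2\pi i/(2k+1)}$.

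Next come the two estimates. From above, Corollary~\ref{corollary:upper-bound} gives $|d^{\phi(\mu)}_{\mu,r}|<\sqrt{n}$ for every $r$, and the congruence conditions in Theorem~\ref{theorem:main} confine the residues $r$ with $d^{\phi(\mu)}_{\mu,r}\ne0$ to those divisible by $p^{v_{p}(n)-1}$ for every prime $p\mid n$. From below, the $q$-analogue of Lucas' theorem evaluates $\bigl|\binom{2k}{k}_{\zeta^{t}}\bigr|=\binom{g-1}{(g-1)/2}$ with $g=\gcd(t,2k+1)$, so only the $t=0$ term is large and
\begin{displaymath}
a^{\phi(\mu)}_{\mu,r}\ \ge\ \frac{1}{2k+1}\Bigl(\binom{2k}{k}-\sum_{g\mid n,\ g<n}n\binom{g-1}{(g-1)/2}\Bigr),
\end{displaymath}
the subtracted sum being negligible next to $\binom{2k}{k}$. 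Hence $a^{\phi(\mu)}_{\mu,r}>\sqrt{n}>|d^{\phi(\mu)}_{\mu,r}|$ for all but finitely many odd $n$.

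The main obstacle is the borderline range of small $n$, where $\binom{2k}{k}/(2k+1)$ only slightly exceeds $\sqrt{n}$ and the crude bound above does not close the gap. For those $n$ I would instead use the precise congruence conditions of Theorem~\ref{theorem:main}, which fix $r$ modulo each prime power dividing $n$, to pin $a^{\phi(\mu)}_{\mu,r}$ down exactly and compare it directly with the (then small) quantity $|d^{\phi(\mu)}_{\mu,r}|$, completing the proof by inspection of the finitely many remaining pairs $(n,r)$.
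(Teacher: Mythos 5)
Your argument is correct and has the same overall architecture as the paper's proof: bound $|d^{\phi(\mu)}_{\mu,r}|$ above by $\sqrt{n}$ (Corollary~\ref{corollary:upper-bound} with $M=n$), produce a lower bound on $a^{\phi(\mu)}_{\mu,r}$ that exceeds $\sqrt{n}$ for all sufficiently large odd $n$, and dispose of the remaining small cases by direct computation. The difference lies in how the lower bound is obtained. The paper quotes Swanson's equidistribution estimate (\cite[Theorem~1.9]{MR3857157}), namely $|a^\lambda_{(n),r}/f^\lambda-1/n|<1/n^2$ whenever $f^\lambda>n^5$, observes that $f^{\phi((n))}=\binom{2m}{m}>2^{n-1}/n>n^5$ for $n\geq 31$, and checks $n<31$ in SageMath. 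You instead make the estimate self-contained for the balanced hook: your descent-set computation correctly identifies $a^{\phi(\mu)}_{\mu,r}$ with the number of $k$-subsets of $\{1,\dots,2k\}$ whose sum is a prescribed residue mod $n$, the $q$-Lucas evaluation $\bigl|\binom{2k}{k}_{\zeta^{t}}\bigr|=\binom{g-1}{(g-1)/2}$ with $g=\gcd(t,n)$ is right (the leftover $q$-binomial factor at a primitive root of unity has modulus one), and the resulting bound $a^{\phi(\mu)}_{\mu,r}\geq\frac1n\binom{2k}{k}-O\bigl(d(n)\,2^{n/3}\bigr)$ comfortably beats $\sqrt n$ once $n$ is moderately large. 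Your route costs a bit more work but avoids citing a nontrivial external theorem and yields near-exact values of $a^{\phi(\mu)}_{\mu,r}$ via the divisor sum, which also shrinks the residual finite check. Both proofs end with a finite verification that is merely asserted; you should actually carry out your ``inspection of the finitely many remaining pairs'' (as the paper does for $n<31$) before regarding the proof as complete, and you may as well note that the degenerate value $n=1$ is vacuous.
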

\begin{proof}
    When $\mu=(n)$, where $n=2m+1$, $\phi(\mu)=(m+1,1^m)$.
    Let $f^\lambda$ denote the dimension of the representation $V_\lambda$ of $S_n$.
    By~\cite[Theorem~1.9]{MR3857157},
    \begin{equation}\label{eq:Joshua-bound}
        \left|\dfrac{a_{\lambda,r}}{f^\lambda}-\dfrac{1}{n}\right| <\dfrac{1}{n^2} \text{ for every }\lambda\vdash n \text{ such that } f^\lambda>n^5.
    \end{equation}
    In our case, $f_{\phi(\mu)}=\binom{2m}{m}$, since any standard tableau of shape $\phi(\mu)$ is determined by which $m$ out of the $2m$ numbers $2,\dotsc, 2m+1$ are in the first row.
    Note that
    \begin{equation}\label{eq:central-binomial}
        \binom{2m}{m} > \frac{4^m}{2m+1} = \frac{2^{n-1}}n.
    \end{equation}
    For $n\geq 31$, $2^{n-1}/n>n^5$, so~\eqref{eq:Joshua-bound} gives us
    \begin{equation}
        a^{\phi(\mu)}_{\mu,r} > f_{\phi(\mu)}\left(\frac 1n - \frac 1{n^2}\right) > \frac{2^{n-1}(n-1)}{n^3} > \sqrt n > |d_{\mu,r}^{\phi(\mu)}|.
    \end{equation}
    The cases $n<31$ are easily checked using SageMath~\cite{sagemath}.
\end{proof}
\begin{lemma}\label{lemma:almost-cycle}
    Suppose $n>4$ is even. Let $\mu=(n-1,1)$.
    Then for every integer $r$, $|d_{\mu,r}^{\phi(\mu)}| < a^{\phi(\mu)}_{\mu,r}$.
\end{lemma}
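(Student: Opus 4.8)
The plan is to run the proof of Lemma~\ref{lemma:cycle} again, after first peeling off the fixed point of $w_\mu$ by a branching step so that Swanson's equidistribution estimate \eqref{eq:Joshua-bound} becomes applicable in $S_{n-1}$.

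\medskip

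\noindent\textbf{Step 1: identify $\phi(\mu)$ and a hook among its corner deletions.}
Since $n$ is even, $\mu=(n-1,1)$ has distinct odd parts; writing $\mu=(2a_1+1,2a_2+1)$ with $a_1=(n-2)/2$ and $a_2=0$, the partition $\phi(\mu)$ has Frobenius coordinates $\big((n-2)/2,0\mid(n-2)/2,0\big)$, so $\phi(\mu)=(n/2,\,2,\,1^{(n-4)/2})$, a partition of $n$ with $n\ge 6$. The point is that deleting the corner box in position $(2,2)$ from $\phi(\mu)$ produces the hook $\nu:=(n/2,1^{(n-2)/2})\vdash n-1$, whose dimension is $f^{\nu}=\binom{n-2}{(n-2)/2}$.

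\medskip

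\noindent\textbf{Step 2: branch to $S_{n-1}$ and bound $a^{\phi(\mu)}_{\mu,r}$ from below.}
Restricting $V_{\phi(\mu)}$ to the copy of $S_{n-1}$ fixing the singleton of $\mu$, the element $w_\mu$ becomes an $(n-1)$-cycle, and $V_{\phi(\mu)}{\downarrow}_{S_{n-1}}=\bigoplus_{\nu'}V_{\nu'}$ over the partitions $\nu'$ obtained from $\phi(\mu)$ by deleting a corner. Counting occurrences of $\zeta_{n-1}^r$ as an eigenvalue on each side and retaining only the term coming from the hook $\nu$ (all terms being nonnegative) gives $a^{\phi(\mu)}_{\mu,r}\ge a^{\nu}_{(n-1),r}$. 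Now $f^{\nu}=\binom{n-2}{(n-2)/2}>2^{n-2}/(n-1)$ by \eqref{eq:central-binomial} applied with $2m=n-2$, so once $2^{n-2}/(n-1)>(n-1)^5$ the estimate \eqref{eq:Joshua-bound}, used inside $S_{n-1}$, yields
\[
a^{\phi(\mu)}_{\mu,r}\ \ge\ a^{\nu}_{(n-1),r}\ >\ f^{\nu}\Big(\tfrac{1}{n-1}-\tfrac{1}{(n-1)^2}\Big)\ >\ \frac{(n-2)\,2^{n-2}}{(n-1)^3}.
\]
On the other hand $M=\prod_j\mu_j=n-1$, so Corollary~\ref{corollary:upper-bound} gives $|d^{\phi(\mu)}_{\mu,r}|<\sqrt{n-1}$. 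Thus it suffices to check $\tfrac{(n-2)2^{n-2}}{(n-1)^3}\ge\sqrt{n-1}$, which holds comfortably once $n$ is large.

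\medskip

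\noindent\textbf{Step 3: the threshold and small cases.}
The binding inequality is $2^{n-2}>(n-1)^6$, which holds for every even $n\ge 32$ (and once it holds the comparison with $\sqrt{n-1}$ is immediate); for such $n$ the desired bound $|d^{\phi(\mu)}_{\mu,r}|<a^{\phi(\mu)}_{\mu,r}$ follows from Step~2. The finitely many even $n$ with $4<n\le 30$ are handled by directly computing the multiplicities $a^{\phi(\mu)}_{\mu,r}$ and the biases $d^{\phi(\mu)}_{\mu,r}$ (the latter via Theorem~\ref{theorem:main}) in SageMath~\cite{sagemath}, exactly as in the proof of Lemma~\ref{lemma:cycle}. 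I do not expect a real obstacle; the only points needing a little care are applying the branching rule to the right copy of $S_{n-1}$ and confirming that the hook $\nu$ genuinely occurs among the corner deletions of $\phi(\mu)$ for all even $n>4$, both of which are routine verifications.
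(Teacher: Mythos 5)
Your proof is correct, and it takes a genuinely different route from the paper's. The paper also peels off the fixed point, but it does so at the level of the alternating groups: by Frobenius reciprocity the split multiplicity $a^{\phi(\mu)^\pm}_{\mu,r}$ is an inner product over $A_{n-1}$, and a branching result of Geetha and Prasad (that $\Res^{A_n}_{A_{n-1}}V_{\phi((n-1,1))}^{\pm}$ contains exactly one of $V_{\phi((n-1))}^{\pm}$) reduces everything to Lemma~\ref{lemma:cycle} for the $(n-1)$-cycle in $A_{n-1}$, with no new numerical work. You instead stay entirely inside symmetric groups: the $S_{n-1}$-branching rule gives $a^{\phi(\mu)}_{\mu,r}\ge a^{\nu}_{(n-1),r}$ for the hook $\nu=\phi((n-1))$, Swanson's estimate~\eqref{eq:Joshua-bound} in $S_{n-1}$ gives the lower bound, and Corollary~\ref{corollary:upper-bound} applied to $\mu=(n-1,1)$ itself (so $M=n-1$) gives $|d^{\phi(\mu)}_{\mu,r}|<\sqrt{n-1}$; your threshold $2^{n-2}>(n-1)^6$ for even $n\ge 32$ and the identification $\phi((n-1,1))=(n/2,2,1^{(n-4)/2})$ with corner-deletion $\nu=(n/2,1^{(n-2)/2})$ all check out. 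What your version buys is independence from the Geetha--Prasad reference, since you never need to know which split constituent of the hook appears in which split constituent upstairs — you only compare $S_n$-level quantities. What it costs is a fresh computer verification for the even $n$ with $4<n\le 30$, whereas the paper's argument inherits its finite check entirely from Lemma~\ref{lemma:cycle}; you should state that check as actually performed rather than as something you ``do not expect'' to be an obstacle.
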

\begin{proof}
    Let $C_n$ denote the cyclic subgroup of $S_n$ generated by an $n$-cycle.
    We have
    \begin{align*}
        a_{\mu,r}^{\phi(\mu)} & = \langle\Ind_{\cyc{w_\mu}}^{A_n}\zeta_{n-1}^r,V_{\phi(\mu)}^{\pm}\rangle_{A_n}\\
        & = \langle\Ind_{A_{n-1}}^{A_n}\Ind_{C_{n-1}}^{A_{n-1}}\zeta_{n-1}^r,V_{\phi(\mu)}^{\pm}\rangle_{A_n}\\
        & = \langle\Ind_{C_{n-1}}^{A_{n-1}} \zeta_{n-1}^r,\Res^{A_n}_{A_{n-1}}V_{\phi(\mu)}^{\pm}\rangle_{A_{n-1}}
    \end{align*}
    by Frobenius reciprocity.
    Geetha and Prasad~\cite{MR3748356} have shown that the restriction of $V_{\phi((n-1,1))}^+$ or $V_{\phi((n-1,1))}^-$ to $A_{n-1}$ contains exactly one of the irreducible representations $V_{(n-1)}^+$ and $V_{(n-1)}^-$.
    Therefore,
    \begin{displaymath}
        a_{\mu,r}^{\phi(\mu)}  \geq \langle\Ind_{C_{n-1}}^{A_{n-1}} \zeta_{n-1}^r,V_{\phi((n-1))}^{\pm}\rangle_{A_{n-1}}>0
    \end{displaymath}
    by Lemma~\ref{lemma:cycle}.
\end{proof}
The following theorem is a special case of a result of Yang and Staroletov~\cite[Corollary 1.2]{MR4328100}.
Here we shall deduce it from Theorem~\ref{theorem:swanson} (Swanson's theorem) and Lemma~\ref{lemma:cycle}.
\begin{theorem}\label{theorem:cycle}
    For integer $n>3$, let $\mu = (n)$ if $n$ is odd, and let $\mu=(n-1,1)$ if $n$ is even.
    Let $m$ be the order of $w_\mu$.
    Then for every irreducible representation $V$ of $A_n$, and $0\leq r\leq m-1$, we have
    \begin{displaymath}
        \Ind_{\cyc{w_\mu}}^{A_n}\zeta_m^r \geq V
    \end{displaymath}
    except when $V$ is one of the following:
    \begin{enumerate}[1.]
        \item $V = V_{(n-1,1)}$ for $n$ odd and $r=0$,
        \item $V = V_{(n)}$ and $r\neq 0$.
    \end{enumerate}
\end{theorem}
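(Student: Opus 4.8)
The plan is to pass from representations to eigenvalue multiplicities via Frobenius reciprocity and then feed everything into Theorem~\ref{theorem:A}. Indeed, $\Ind_{\cyc{w_\mu}}^{A_n}\zeta_m^r\geq V$ holds precisely when the multiplicity of $\zeta_m^r$ as an eigenvalue of $w_\mu$ in $V$ is positive, and in both cases at hand $\mu$ has distinct odd parts and $w_\mu$ is an even permutation of order $m$. So Theorem~\ref{theorem:A} gives this multiplicity as $a^\lambda_{\mu,r}$ when $V=V_\lambda$ with $\lambda\neq\lambda'$, as $\tfrac12\bigl(a^{\phi(\mu)}_{\mu,r}\pm d^{\phi(\mu)}_{\mu,r}\bigr)$ when $V=V_{\phi(\mu)}^\pm$, and as $\tfrac12 a^\lambda_{\mu,r}$ when $V=V_\lambda^\pm$ with $\lambda=\lambda'\neq\phi(\mu)$. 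Thus the statement reduces to identifying the triples with $a^\lambda_{\mu,r}=0$ (for $\lambda\neq\phi(\mu)$) or with $a^{\phi(\mu)}_{\mu,r}\pm d^{\phi(\mu)}_{\mu,r}=0$.

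The self-conjugate representation $V_{\phi(\mu)}^\pm$ is disposed of at once: Lemma~\ref{lemma:cycle} (for $n$ odd) and Lemma~\ref{lemma:almost-cycle} (for $n$ even with $n>4$) give $|d^{\phi(\mu)}_{\mu,r}|<a^{\phi(\mu)}_{\mu,r}$, whence $a^{\phi(\mu)}_{\mu,r}\pm d^{\phi(\mu)}_{\mu,r}>0$ and $V_{\phi(\mu)}^\pm$ is never an exception. It remains to decide when $a^\lambda_{\mu,r}=0$. For $n$ odd and $\mu=(n)$ this is exactly Swanson's theorem (Theorem~\ref{theorem:swanson}) for the $n$-cycle $w_\mu$; since $V_\lambda\cong V_{\lambda'}$ as representations of $A_n$ and $w_\mu$ is even, $a^\lambda_{\mu,r}=a^{\lambda'}_{\mu,r}$, so I may use whichever of $\lambda,\lambda'$ appears in Swanson's list. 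For $n>3$ odd the only entries there are $(2,1^{n-2})$ (equivalently $(n-1,1)$) at $r\equiv0$ and $(n)$ (equivalently $(1^n)$) at $r\not\equiv0$; as representations of $A_n$ these are $V_{(n-1,1)}$ and the trivial representation $V_{(n)}$, which are exceptions~1 and~2. Moreover no self-conjugate partition lies in Swanson's list, so every remaining $V_\lambda^\pm$ with $\lambda=\lambda'\neq\phi((n))$ survives.

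For $n$ even, $\mu=(n-1,1)$ and $m=n-1$, I would reduce to the odd case by branching. Since $\cyc{w_\mu}\subseteq S_{n-1}$, the branching rule gives $a^\lambda_{\mu,r}=\sum_\nu a^\nu_{(n-1),r}$, the sum over partitions $\nu$ of $n-1$ obtained by deleting a removable corner of $\lambda$, and inside $S_{n-1}$ the element $w_\mu$ is an $(n-1)$-cycle, so Swanson's theorem for $S_{n-1}$ controls each summand. As $n-1$ is odd, $a^\nu_{(n-1),r}=0$ only when $\nu$ is one of $(2,1^{n-3})$ (at $r\equiv0$), $(n-1)$ (at $r\not\equiv0$) or $(1^{n-1})$ (at $r\not\equiv0$). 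Hence $a^\lambda_{\mu,r}=0$ forces every removable corner of $\lambda$ to be one of these at a common residue; a careful inspection of the partitions with this property (the rectangles, together with the hook $(2,1^{n-2})$) shows that for $n>4$ this can only happen for $\lambda\in\{(n),(1^n)\}$ with $r\neq0$, and both of these are the trivial representation $V_{(n)}$ of $A_n$. So exception~2 is the only one that arises and no $V_\lambda^\pm$ is exceptional.

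I expect the genuinely delicate step to be this case analysis of removable corners when $n$ is even: pinning down exactly which $\lambda$ allow $a^\lambda_{\mu,r}=0$ requires keeping track both of which corner-partitions are exceptional for the $(n-1)$-cycle in $S_{n-1}$ and of the residues at which they are exceptional (for instance the two removable corners $(1^{n-1})$ and $(2,1^{n-3})$ of $(2,1^{n-2})$ are each exceptional, but never simultaneously, so $(2,1^{n-2})$ is not lost). Alongside this, the small cases outside the range of Lemmas~\ref{lemma:cycle} and~\ref{lemma:almost-cycle} — chiefly $n=4$, where $\phi((3,1))=(2,2)$ and Lemma~\ref{lemma:almost-cycle} is unavailable — would be handled by direct computation in $A_4$.
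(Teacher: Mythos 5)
Your proposal is essentially the paper's own argument: Frobenius reciprocity to convert containment into positivity of eigenvalue multiplicities, Swanson's theorem (together with $a^\lambda_{\mu,r}=a^{\lambda'}_{\mu,r}$ for even $w_\mu$) for the non-self-conjugate $\lambda$, the branching rule over removable corners to reduce the even case to the $(n-1)$-cycle in $S_{n-1}$, and Lemmas~\ref{lemma:cycle} and~\ref{lemma:almost-cycle} for $V_{\phi(\mu)}^{\pm}$; your corner analysis reaches the same conclusion the paper does.

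One remark: your caution about $n=4$ is not just a formality. Lemma~\ref{lemma:almost-cycle} requires $n>4$, and the direct computation you propose in $A_4$ does \emph{not} confirm the statement there: $V_{(2,2)}^{\pm}$ are the nontrivial one-dimensional representations of $A_4$, on which a $3$-cycle acts by a primitive cube root of unity, so neither contains an invariant vector ($r=0$), and at each $r\neq 0$ exactly one of the two occurs. This is consistent with exception~2 of Theorem~\ref{theorem:C} but is missing from the exception list of Theorem~\ref{theorem:cycle}, whose hypothesis should read $n>4$ (the paper's own proof has the same blind spot, citing the large-cycle lemma for the self-conjugate case without excluding $n=4$). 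So if you carry out your plan, be prepared for the $A_4$ computation to force a correction to the statement rather than a verification of it.
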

\begin{proof}
    For $n$ odd by~Theorem~\ref{theorem:swanson}, $a^\lambda_{(n),r}>0$ except in the following cases:
    \begin{enumerate}[1.]
        \item $\lambda=(n-1,1)$ and $r=0$
        \item $\lambda=(2,1^{n-2})$ and $r=0$
        \item $\lambda=(n)$ and $r\neq 0$
        \item $\lambda=(1^n)$ and $r\neq 0$.
    \end{enumerate}
    It follows that, if $\lambda$ is not self-conjugate or $\lambda\neq \phi((n))$, then $\Ind_{C_{n}}^{A_n}\zeta_n^r\geq V_\lambda$ except in the following cases:
    \begin{enumerate}[1.]
        \item $\lambda=(n-1,1)$ and $r=0$
        \item $\lambda=(n)$ and $r\neq 0$.
    \end{enumerate}
    If $\lambda=\phi(n)$, then $a^{\lambda^\pm}_{(n),r} = (a^{\lambda}_{(n),r}\pm d^\lambda_{(n),r})/2$, which are positive by Lemma~\ref{lemma:cycle}.

    Let $n$ be even.
    By the Pieri rule, $a^\lambda_{(n-1,1),r}\geq 0$ if and only if there exists a partition $\eta$ whose Young diagram is obtained by removing a box from the Young diagram of $\lambda$ such that $a^\eta_{(n-1,1),r}\geq 0$.
    Applying the previous argument to $S_{n-1}$, we see that $a^\lambda_{(n-1,1),r}\geq 0$ except in the following cases:
    \begin{enumerate}[1.]
        \item $\lambda=(n)$ and $r\neq 0$
        \item $\lambda=(1^n)$ and $r\neq 0$.
    \end{enumerate}
    Hence, if $\lambda$ is not self-conjugate or $\lambda\neq \phi((n-1,1))$, then $\Ind_{C_{(n-1,1)}}^{A_n}\zeta_{n-1}^r\geq V_\lambda$, except when $\lambda=(n)$ and $r\neq 0$.
    If $\lambda=\phi((n-1,1))$, then $a^{\lambda^\pm}_{(n-1,1),r} = (a^{\lambda}_{(n-1,1),r}\pm d^\lambda_{(n-1,1),r})/2$, which are positive by Lemma~\ref{lemma:cycle}.
\end{proof}
\subsection{The General Case}\label{sec:general}
We are now in a position to complete the proof of Theorem~\ref{theorem:C}.
\begin{proof}[Proof of Theorem~\ref{theorem:C}]
    The exceptions in Theorem~\ref{theorem:C} are restrictions to $A_n$ of the exceptions in Theorem~\ref{theorem:elementwise-sn}, or subrepresentations thereof.
    Therefore, they cannot admit non-zero invariant vectors.

    It only remains to check that for a partition $\mu$ with distinct odd parts such that the representation $V_{\phi(\mu)}$ of $S_n$ admits a nonzero invariant vector for $w_\mu$, both $V_{\phi(\mu)}^+$ and $V_{\phi(\mu)}^-$ admit nonzero invariant vectors for $w_\mu$.
    The case where $\mu=(n)$, $n\neq 3$ odd, was proved in Lemma~\ref{lemma:cycle}.

    Now consider the case where $\mu$ is a partition with distinct odd parts, but $3$ is not a part of $\mu$.
    By induction in stages,
    \begin{equation}\label{eq:induction-ineq}
        \Ind_{\cyc{w_\mu}}^{A_n}1 \geq \Ind_{\prod_{j=1}^k A_{\mu_j}}^{A_n} \bigotimes_{j=1}^k \Ind_{C_{\mu_j}}^{A_{\mu_j}}1 \geq \Ind_{\prod_{j=1}^k A_{\mu_j}}^{A_n} \bigotimes_{j=1}^k V_{\phi((\mu_j))}.
    \end{equation}
    The first inequality follows from the fact that $\cyc{w_\mu}$ is a subgroup of $\prod_{j=1}^k C_{\mu_j}$.
    The second inequality follows from Lemma~\ref{lemma:cycle}.
    It remains to show that
    \begin{equation}\label{eq:inequality}
        \Ind_{\prod_{j=1}^k A_{\mu_j}}^{A_n} \bigotimes_{j=1}^k V_{\phi((\mu_j))} \geq V_{\phi(\mu)}^\pm.
    \end{equation}
    The character of the left-hand side is invariant under conjugation by elements of $S_n$.
    Therefore, the left-hand side contains $V_{\phi(\mu)}^+$ if and only if it contains $V_{\phi(\mu)}^-$ (in which case it contains $V_{\phi(\mu)}$).

    Since $\Ind_{A_{\mu_j}}^{S_{\mu_j}}V_{\phi((\mu_j))}^\pm = V_{\phi((\mu_j))}$, we have
    \begin{equation}\label{eq:lr}
        \Ind_{A_n}^{S_n}\Ind_{\prod_{j=1}^k A_{\mu_j}}^{A_n} \bigotimes_{j=1}^k V_{\phi((\mu_j))}^\pm \geq \Ind_{\prod_{j=1}^k S_{\mu_j}}^{S_n}\bigotimes_{j=1}^k V_{\phi((\mu_j))}.
    \end{equation}
    By the Littlewood-Richardson rule~\cite[Section~I.9]{MR3443860} the multiplicity of $V_{\phi(\mu)}$  in $\Ind_{S_{\mu_1}\times S_{n-\mu_1}}^{S_n} V_{\phi((\mu_1))}\otimes V_{\phi((\mu_2,\dotsc,\mu_k))}$ is the number of semistandard Young tableaux of shape $\phi(\mu)/\phi((\mu_1))$ and weight $\phi((\mu_2,\dotsc,\mu_k))$ whose reverse reading word is a ballot sequence.
    But semistandard Young tableau of shape $\phi(\mu)/\phi((\mu_1))$ are in bijection with semistandard tableau of shape $\phi((\mu_2,\dotsc,\mu_k))$.
    Filling all the cells of the $i$th row of the Young diagram of $\phi((\mu_2,\dotsc,\mu_k))$ with $i$ results in a reverse reading word that is a ballot sequence.
    Therefore,
    \begin{displaymath}
        \Ind_{S_{\mu_1}\times S_{n-\mu_1}}^{S_n} V_{\phi((\mu_1))}\otimes V_{\phi((\mu_2,\dotsc,\mu_k))} \geq V_{\phi(\mu)}.
    \end{displaymath}
    Working recursively with respect to $k$, we get
    \begin{displaymath}
        \Ind_{\prod_{j=1}^k S_{\mu_j}}^{S_n}\bigotimes_{j=1}^k V_{\phi((\mu_j))} \geq V_{\phi(\mu)}.
    \end{displaymath}
    Now using~\eqref{eq:lr}, we get
    \begin{displaymath}
        \Ind_{A_n}^{S_n}\Ind_{\prod_{j=1}^k A_{\mu_j}}^{A_n} \bigotimes_{j=1}^k V_{\phi((\mu_j))} \geq V_{\phi(\mu)}^\pm.
    \end{displaymath}
    But the only irreducible representations of $A_n$, which upon induction to $S_n$ contain $V_{\phi(\mu)}$, are $V_{\phi(\mu)}^\pm$, so~\eqref{eq:inequality} must hold.

    If $\mu$ has $3$ as a part, since the cases $\mu=(3)$ and $\mu=(3,1)$ are among the exceptions in Theorem~\ref{theorem:C} we may assume that $\mu$ has a part that is greater than $\mu_l=3$ ($l$ has to be $k-1$ or $k$).
    Suppose $\mu_{l-1}=2m+1$.
    By Theorem~\ref{theorem:cycle}, $\Ind_{C_{\mu_{l-1}}}^{A_{\mu_{l-1}}}1\geq V_{(m,2,1^{m-1})}$.
    Also, $\Ind_{C_{\mu_l}}^{A_{\mu_l}}1$ is the trivial representation of $A_3$.
    In place of~\eqref{eq:induction-ineq}, we can insert different tensor factors in the $l-1$st and $l$th places to get
    \begin{align*}
        \Ind_{\cyc{w_\mu}}^{A_n}1 & \geq \Ind_{\prod_{j=1}^k A_{\mu_j}}^{A_n} \bigotimes_{j=1}^k \Ind_{C_{\mu_j}}^{A_{\mu_j}}1\\
        & \geq \Ind_{\prod_{j=1}^k A_{\mu_j}}^{A_n} \bigotimes_{j\neq l-1,1}^k V_{\phi((\mu_j))}\otimes V_{(m,2,1^{m-1})}\otimes V_{(1^3)}.
    \end{align*}
    By Pieri's rule, $\Ind_{S_{\mu_{l-1}}\times S_3}^{S_{\mu_{l-1}+3}}V_{(m,2,1^{m-1})}\otimes V_{(1^3)}\geq V_{(m+1,3,2,1^{m-2})} = V_{\phi(\mu_{l-1},3)}$.
    Proceeding as with~\eqref{eq:lr}, we get $\Ind_{\cyc{w_\mu}}^{A_n}1\geq V_{\phi(\mu)}^\pm$.
\end{proof}
\section{Global Conjugacy Classes}\label{sec:global}
A group $G$ acts on any of its conjugacy classes $C$ by conjugation.
Following Heide and Zalessky~\cite{MR2279239}, a conjugacy class $C$ of a finite group $G$ is called a \emph{global conjugacy class} if the corresponding permutation representation of $G$ contains every irreducible representation of $G$ as a subrepresentation.
Equivalently, if $Z$ is the centralizer of an element of $C$, $\Ind_Z^G1$ contains every irreducible representation of $G$ as a subrepresentation.

Heide and Zalessky showed that $A_n$ has a global conjugacy class for $n>4$.
We recover their result while establishing a larger family of global conjugacy classes (compare with the proof of Theorem~4.3 in~\cite{MR2279239}).
\begin{lemma}\label{lemma:global-conjugacy-class}
    For any positive integer $n$, let $\mu$ be a partition of $n$ with at least two parts, whose parts are odd and distinct, and $\mu$ is different from $(3,1)$ and $(5,3)$.
    Then both the conjugacy classes in $A_n$ consisting of permutations with cycle type $\mu$ are global conjugacy classes in $A_n$.
\end{lemma}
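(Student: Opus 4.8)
The plan begins by identifying the centralizer. As every $\mu_j$ is odd, each cycle of $w_\mu$ is an even permutation, so writing $C_{\mu_j}$ for the cyclic group generated by the $j$th cycle we have $Z:=Z_{A_n}(w_\mu)=Z_{S_n}(w_\mu)=\prod_{j=1}^k C_{\mu_j}\subseteq\prod_{j=1}^k A_{\mu_j}\subseteq A_n$, and by induction in stages
\[
\Ind_Z^{A_n}1=\Ind_{\prod_j A_{\mu_j}}^{A_n}\Bigl(\bigotimes_{j=1}^k\Ind_{C_{\mu_j}}^{A_{\mu_j}}1\Bigr).
\]
Call a partition $\nu\vdash\mu_j$ \emph{admissible} if $\Ind_{C_{\mu_j}}^{A_{\mu_j}}1\geq\Res^{S_{\mu_j}}_{A_{\mu_j}}V_\nu$. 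By Theorem~\ref{theorem:cycle} with $r=0$ this means $\nu\notin\{(\mu_j-1,1),(2,1^{\mu_j-2})\}$ when $\mu_j\geq 5$, and (using $A_{\mu_j}=C_{\mu_j}$) $\nu=(1)$ when $\mu_j=1$, $\nu\in\{(3),(1^3)\}$ when $\mu_j=3$. Since $\mu\neq(3,1)$ forces $\mu_1\geq 5$, the Lemma will follow from the combinatorial claim $(\star)$: \emph{for every $\lambda\vdash n$ there are admissible $\nu^{(j)}\vdash\mu_j$, $j=1,\dots,k$, with $c^\lambda_{\nu^{(1)},\dots,\nu^{(k)}}\neq 0$.}

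To deduce the Lemma from $(\star)$: since $V_\lambda\cong V_{\lambda'}$ over $A_n$ when $\lambda\neq\lambda'$, assume $\lambda_1\geq\lambda'_1$; choose admissible $\nu^{(j)}$ as in $(\star)$ and set $W:=\Ind_{\prod_j A_{\mu_j}}^{A_n}\Res^{\prod_j S_{\mu_j}}_{\prod_j A_{\mu_j}}\bigl(\bigotimes_j V_{\nu^{(j)}}\bigr)$, a subrepresentation of $\Ind_Z^{A_n}1$ by admissibility. Transitivity of induction and the general inequality $\Ind_H^G\Res_H^G U\geq U$ give $\Ind_{A_n}^{S_n}W\geq\Ind_{\prod_j S_{\mu_j}}^{S_n}\bigl(\bigotimes_j V_{\nu^{(j)}}\bigr)\geq V_\lambda$ by the Littlewood--Richardson rule. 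When $\lambda\neq\lambda'$ this yields $\Res_{A_n}V_\lambda\leq W\leq\Ind_Z^{A_n}1$. When $\lambda=\lambda'$, note $W$ is induced from the restriction of a representation of $\prod_j S_{\mu_j}$, and conjugation by a transposition in $S_{\mu_1}\subseteq S_n$ (odd in $S_n$, as $\mu_1\geq 5$) normalizes $\prod_j A_{\mu_j}$ and fixes the isomorphism class of that representation, so the character of $W$ is $S_n$-conjugation invariant; hence $\langle W,V_\lambda^+\rangle=\langle W,V_\lambda^-\rangle$, and since their sum is $\langle\Ind_{A_n}^{S_n}W,V_\lambda\rangle\geq 1$, both are positive. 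The other conjugacy class of cycle type $\mu$ is handled by conjugating throughout by an odd permutation.

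It remains to prove $(\star)$. A routine recursion — peel off, in reading order, $\mu_1$ cells of $\lambda$ to get $\nu^{(1)}$, then $\mu_2$ cells of some partition $\eta$ with $c^\lambda_{\nu^{(1)},\eta}\neq 0$ (one exists since $s_{\lambda/\nu^{(1)}}$ is a nonzero nonnegative combination of Schur functions), and so on — produces \emph{some} tuple of partitions $\nu^{(j)}\vdash\mu_j$ with $c^\lambda_{\nu^{(1)},\dots,\nu^{(k)}}\neq 0$; the task is to carry the slicing out so that no $\nu^{(j)}$ is a forbidden hook. For $\lambda$ with a long first row one may take $\nu^{(1)}$ close to a single row and $\nu^{(2)},\dots,\nu^{(k)}$ to be the single rows $(\mu_j)$, in which case — using $h_{\mu_2}\cdots h_{\mu_k}=\sum_\eta K_{\eta,(\mu_2,\dots,\mu_k)}s_\eta$ and $s_\eta=\sum_{\beta\preceq\eta}K_{\eta\beta}m_\beta$, together with the fact that $m_{\mathrm{sort}(\lambda-\nu^{(1)})}$ occurs in $s_{\lambda/\nu^{(1)}}$ (fill the $i$th row of $\lambda/\nu^{(1)}$ with $i$'s) — $(\star)$ reduces to the dominance inequality $\mathrm{sort}(\lambda-\nu^{(1)})\succeq(\mu_2,\dots,\mu_k)$; the more balanced $\lambda$ require genuinely two-dimensional pieces, chosen adaptively. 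In all cases the two forbidden hooks in a factor of size $\mu_j\geq 7$ are a negligible fraction of the partitions of $\mu_j$ and can be dodged, leaving only finitely many tight configurations — small $n$, near-self-conjugate $\lambda$, and $\mu\in\{(5,1),(5,3,1)\}$ — which are checked directly or with SageMath~\cite{sagemath}. It is precisely here that $\mu=(3,1)$ fails (the slicing $(2,2)=(2,1)\cup(1)$ cannot be replaced by an admissible one) and $\mu=(5,3)$ fails (at $\lambda=(4,4)$). That these are genuine exceptions follows from Theorem~\ref{theorem:elementwise-sn}, which shows $\Ind^{S_n}_Z1$ (here $Z=\cyc{w_\mu}$, as the parts of $\mu$ are coprime) contains no copy of $V_{(2,2)}$ when $\mu=(3,1)$, resp. of $V_{(4,4)}$ when $\mu=(5,3)$; since $\langle\Ind^{A_n}_Z1,\Res_{A_n}V_\lambda\rangle_{A_n}=\langle\Ind^{S_n}_Z1,V_\lambda\rangle_{S_n}$, the associated $A_n$-representation — $V_{(2,2)}^\pm$, resp. $\Res_{A_8}V_{(4,4)}$ — is missing from $\Ind^{A_n}_Z1$.

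The main obstacle is this last step: showing the reading-order slicing of $\lambda$ can always be perturbed, with room to spare, into one avoiding the forbidden hook shapes in every factor. The tension is that making the leftover of $\lambda$ after the first chunk dominant enough (so the remaining chunks can be single rows) drives the first chunk toward a long-row shape, while admissibility constrains every chunk's shape; the balance only breaks for small $\lambda$, which is exactly where the cycle types $(3,1)$ and $(5,3)$ sit. For $\mu_1\geq 7$ there is always slack, whereas $\mu_1=5$ (the two cases $(5,1)$ and $(5,3,1)$, with $(5,3)$ already excluded) needs a brief separate verification.
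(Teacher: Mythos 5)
Your reduction is correctly set up: the identification $Z_{A_n}(w_\mu)=\prod_j C_{\mu_j}$, the definition of admissibility via Theorem~\ref{theorem:cycle}, the containment $W\leq \Ind_Z^{A_n}1$, the inequality $\Ind_{A_n}^{S_n}W\geq \Ind_{\prod_j S_{\mu_j}}^{S_n}\bigotimes_j V_{\nu^{(j)}}$, and the $S_n$-invariance argument forcing $\langle W,V_\lambda^+\rangle=\langle W,V_\lambda^-\rangle$ when $\lambda=\lambda'$ are all valid. The gap is that everything has been pushed into the claim $(\star)$, and $(\star)$ is never proved. Asserting that for \emph{every} $\lambda\vdash n$ one can slice $\lambda$ into Littlewood--Richardson-compatible pieces $\nu^{(j)}\vdash\mu_j$ while avoiding the two forbidden hooks $(\mu_j-1,1)$ and $(2,1^{\mu_j-2})$ in every factor is, up to the bookkeeping you have already done, exactly the content of Sundaram's classification of global classes of $S_n$ (Theorem~\ref{theorem:sundaram}) restricted to distinct odd parts --- a genuinely hard combinatorial theorem. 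Your treatment of it ("peel off cells in reading order", "negligible fraction of the partitions of $\mu_j$ and can be dodged", "finitely many tight configurations \dots checked directly or with SageMath") is a heuristic, not an argument: the adaptive "two-dimensional" slicing for balanced $\lambda$ is never constructed, the dominance reduction is only sketched for $\lambda$ with a long first row, and the finite set of exceptional configurations is never delimited, so the SageMath check cannot even be specified. As written, the proof establishes the lemma only conditionally on $(\star)$.

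The fix is either to cite Sundaram's theorem or to supply a complete proof of $(\star)$; the paper does the former, which collapses most of your work. With Theorem~\ref{theorem:sundaram} in hand, $\Ind_{Z_\mu}^{S_n}1\geq V_\lambda$ for all $\lambda$, which immediately gives every non-self-conjugate $\lambda$ over $A_n$; for self-conjugate $\lambda\neq\phi(\mu)$ a support computation shows $\langle\Ind_{Z_\mu}^{A_n}1,\chi_\lambda^+-\chi_\lambda^-\rangle=0$, so the two halves occur equally often and hence both occur; and an explicit Littlewood--Richardson tableau is needed only for the single shape $\lambda=\phi(\mu)$ (reusing the argument from the proof of Theorem~\ref{theorem:C}), rather than for all $\lambda$ simultaneously as your $(\star)$ demands.
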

The proof is based on the following theorem of Sundaram~\cite[Theorem~5.1]{MR3805649}.
\begin{theorem}\label{theorem:sundaram}
    Let $n\neq 4,8$. The permutations with cycle type $\mu\vdash n$ form a global conjugacy class in $S_n$ if and only if $\mu$ has at least two parts, and all its parts are odd and distinct.
\end{theorem}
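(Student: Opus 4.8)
The plan is to reduce the $A_n$ statement to Sundaram's theorem for $S_n$ (Theorem~\ref{theorem:sundaram}) together with a computation of the ``$\pm$-bias,'' and then to dispatch the one leftover family with the cycle estimates already proved in Section~\ref{sec:cyclic-an}. First I would record the structure of the centralizer. Since $\mu$ has distinct odd parts, $Z:=Z_{A_n}(w_\mu)$ equals $Z_{S_n}(w_\mu)=\prod_{j=1}^k C_{\mu_j}$, the product of the cyclic groups generated by the individual cycles of $w_\mu$, of order $M=\prod_j\mu_j$, so the $S_n$-class of $w_\mu$ splits into two $A_n$-classes. Conjugating $Z$ by an odd permutation interchanges $V_\lambda^+$ with $V_\lambda^-$ for self-conjugate $\lambda$ and fixes $\Res_{A_n}V_\lambda$ otherwise, so it is enough to show that $\Ind_Z^{A_n}1$ contains every irreducible representation of $A_n$; the statement for the conjugate class then follows. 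By Frobenius reciprocity, the multiplicity of $\Res_{A_n}V_\lambda$ in $\Ind_Z^{A_n}1$ equals $a_\lambda:=\langle\Ind_Z^{S_n}1,V_\lambda\rangle_{S_n}$, which is positive for every $\lambda$ once $n\notin\{4,8\}$ by Theorem~\ref{theorem:sundaram}. At $n=4,8$ the only partitions with distinct odd parts and at least two parts are $(3,1),(5,3),(7,1)$; the first two are excluded from the statement, and $\mu=(7,1)$ will be verified by direct computation as with the other small cases in the paper. This settles all $V_\lambda$ with $\lambda\ne\lambda'$, so the crux is the self-conjugate case.

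For $\lambda=\lambda'$ we have $\Res_{A_n}V_\lambda=V_\lambda^+\oplus V_\lambda^-$ with multiplicities $\tfrac12(a_\lambda\pm b_\lambda)$, where $b_\lambda=\tfrac1M\sum_{z\in Z}\delta_\lambda(z)$ and $\delta_\lambda=\chi_\lambda^+-\chi_\lambda^-$; since these are nonnegative integers with sum $a_\lambda>0$, I only need $a_\lambda>|b_\lambda|$. The key observation is that, by Theorem~\ref{theorem:plusminus}, $\delta_\lambda$ is supported on the conjugacy classes of elements whose cycle type $\nu$ has distinct odd parts with $\phi(\nu)=\lambda$, whereas an element $c_1^{i_1}\cdots c_k^{i_k}$ of $Z$ has a distinct-part cycle type only when $\gcd(i_j,\mu_j)=1$ for every $j$, in which case its cycle type is exactly $\mu$. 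Hence $b_\lambda=0$ unless $\lambda=\phi(\mu)$, and in that subcase $\tfrac12(a_\lambda\pm b_\lambda)=\tfrac12a_\lambda$ forces $a_\lambda$ to be even, so both multiplicities are positive.

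It remains to handle $\lambda=\phi(\mu)$, where I would compute $b_\lambda$ explicitly. Using the conjugating permutation $\sigma_j\colon a\mapsto i_j^{-1}a$ from the proof of Theorem~\ref{theorem:power_class}, an element $z=c_1^{i_1}\cdots c_k^{i_k}$ with every $i_j$ a unit mod $\mu_j$ is $A_n$-conjugate to $w_\mu$ or to $w w_\mu w^{-1}$ according to the sign $\prod_j\tlegendre{i_j}{\mu_j}$, so $\delta_\lambda(z)=\sqrt{\epsilon_\mu M}\prod_j\tlegendre{i_j}{\mu_j}$ while every other $z\in Z$ contributes $0$; the sum over $Z$ then factors across $j$, and $\sum_{i\in Z_{\mu_j}^*}\tlegendre{i}{\mu_j}$ is the sum of the Jacobi character of $Z_{\mu_j}^*$, equal to $\varphi(\mu_j)$ (Euler's totient) if $\mu_j$ is a perfect square and $0$ otherwise. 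Thus $b_\lambda=0$ — and we are done as above — unless every $\mu_j$ is a perfect square, say $\mu_j=t_j^2$ with $t_j$ odd, in which case $\epsilon_\mu=1$, $M=(\prod_jt_j)^2$, and $|b_\lambda|=\prod_j\varphi(t_j)$.

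The last and hardest case is thus $\mu=(t_1^2>\cdots>t_k^2)$ with the $t_j$ odd and $k\ge2$, where I must prove $a_{\phi(\mu)}>\prod_j\varphi(t_j)$. If $\mu=(t_1^2,1)$ then $Z=\cyc{w_\mu}$ and $n=t_1^2+1>4$ is even, and the required inequality is exactly $a^{\phi(\mu)}_{\mu,0}>|d^{\phi(\mu)}_{\mu,0}|$, which is Lemma~\ref{lemma:almost-cycle}. Otherwise I peel off the largest part: the ballot-filling argument from the proof of Theorem~\ref{theorem:C} gives $c^{\phi(\mu)}_{\phi((\mu_1)),\,\phi((\mu_2,\dotsc,\mu_k))}\ge1$, so restricting $V_{\phi(\mu)}$ to $C_{\mu_1}\times\prod_{j\ge2}C_{\mu_j}$ yields $a_{\phi(\mu)}\ge a^{\phi((\mu_1))}_{(\mu_1),0}\cdot a_{\phi((\mu_2,\dotsc,\mu_k))}$. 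By Lemma~\ref{lemma:cycle} and Corollary~\ref{corollary:zero-and-one}, the first factor exceeds $|d^{\phi((\mu_1))}_{(\mu_1),0}|=\varphi(t_1)$; by induction on the number of parts, with the one- and two-part base cases supplied by Lemmas~\ref{lemma:cycle} and~\ref{lemma:almost-cycle}, the second factor exceeds $\prod_{j\ge2}\varphi(t_j)$; multiplying these strict inequalities of positive integers gives $a_{\phi(\mu)}\ge(\varphi(t_1)+1)\bigl(\prod_{j\ge2}\varphi(t_j)+1\bigr)>\prod_j\varphi(t_j)$, as needed. The real obstacle is exactly this matching: one must recognize that the bias $b_{\phi(\mu)}$ factors as the product over the parts of the single-cycle biases $d^{\phi((\mu_j))}_{(\mu_j),0}$, so that $a_{\phi(\mu)}>|b_{\phi(\mu)}|$ can be bootstrapped, through the branching rule, from the cycle estimates already in hand; the Jacobi-character sum above is the computation that makes this factorization visible.
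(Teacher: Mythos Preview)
Your proposal does not address Theorem~\ref{theorem:sundaram} at all: that theorem is about global conjugacy classes in the \emph{symmetric} group $S_n$, whereas you prove the $A_n$ statement, namely Lemma~\ref{lemma:global-conjugacy-class}. Indeed, you explicitly invoke Theorem~\ref{theorem:sundaram} as an input (``reduce the $A_n$ statement to Sundaram's theorem for $S_n$''), so what you have written cannot be a proof of it. The paper itself does not prove Theorem~\ref{theorem:sundaram} either; it is quoted from Sundaram~\cite[Theorem~5.1]{MR3805649} and used as a black box.

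If we read your proposal instead as a proof of Lemma~\ref{lemma:global-conjugacy-class}, it is correct and agrees with the paper for $\lambda\neq\lambda'$ and for $\lambda=\lambda'\neq\phi(\mu)$. For $\lambda=\phi(\mu)$ you take a genuinely different route: the paper reuses the Littlewood--Richardson construction from the proof of Theorem~\ref{theorem:C} (equation~\eqref{eq:induction-ineq} onward) to exhibit both $V_{\phi(\mu)}^{\pm}$ inside $\Ind_{Z_\mu}^{A_n}1$ directly, whereas you compute the bias $b_{\phi(\mu)}$ via the Jacobi character, see that it factors as $\prod_j\varphi(t_j)$ or vanishes, and then beat it with $a_{\phi(\mu)}$ by peeling off parts and invoking Lemmas~\ref{lemma:cycle} and~\ref{lemma:almost-cycle}. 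This alternative is more quantitative---it shows, for instance, that the bias already vanishes unless every part is a perfect square---but the inductive bookkeeping needs care: the one-part base case fails at $\nu=(1)$ since $a_{\phi((1))}=1=\varphi(1)$, so one must route the two-part case $(t^2,1)$ through Lemma~\ref{lemma:almost-cycle} (as you do) and never peel down to $(1)$ alone.
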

\begin{proof}[Proof of Theorem~\ref{lemma:global-conjugacy-class}]
    Let $\mu$ be as in the statement of Theorem~\ref{lemma:global-conjugacy-class}.
    By Theorem~\ref{theorem:sundaram} (and explicit calculation for $\mu=(7,1)$), permutations with cycle type $\mu$ form a global conjugacy class in $S_n$.
    Let $Z_\mu$ denote the centralizer of $w_\mu$ in $A_n$.
    Since $\mu$ has distinct odd parts, $Z_\mu$ is also the centralizer of $w_\mu$ in $S_n$.
    Thus, $\Ind_{Z_\mu}^{S_n}1\geq V_\lambda$ for every partition of $n$.
    If $\lambda\neq \lambda'$ then this implies $\Ind_{Z_\mu}^{A_n}1\geq V_\lambda$.

    Now suppose $\lambda=\lambda'$.
    The character of $\Ind_{Z_\mu}^{A_n}1$ is supported on conjugacy classes of powers of $w_\mu$.
    The only such classes whose cycle types have distinct odd parts are the two classes of permutations with cycle type $\mu$.
    Therefore, if $\lambda\neq \phi(\mu)$, then Schur inner product of $\Ind_{Z_\mu}^{A_n}1$ and $\delta_\lambda$ is zero.
    It follows that the multiplicities of $V_\lambda^+$ and $V_\lambda^-$ in $\Ind_{Z_\mu}^{A_n}1$ are equal. Theorem~\ref{theorem:sundaram} tells us that their sum is positive, so each of them has to be positive.

    Finally, consider the case where $\lambda=\phi(\mu)$.
    Since the parts of $\mu$ are distinct,
    \begin{displaymath}
        \Ind_{Z_\mu}^{A_n}1 = \Ind_{\prod_{j=1}^k A_{\mu_j}}^{A_n} \bigotimes_{j=1}^k \Ind_{C_{\mu_j}}^{A_{\mu_j}}1.
    \end{displaymath}
    Following the proof of Theorem~\ref{theorem:C} from~\eqref{eq:induction-ineq} onwards establishes that $\Ind_{Z_\mu}^{A_n}1\geq V_{\phi(\mu)}^\pm$.
\end{proof}
\begin{lemma}\label{lemma:9}
    Assume $p,q>3$. Let $H\leq S_p$, $K\leq S_q$ and assume that either
    \begin{enumerate}[5.3.1.]
        \item\label{lemma:91} Both $p$ and $q$ are odd,  $\Ind_{H}^{S_p}1\geq V_\alpha$ for all $\alpha\vdash p$ except possibly $\alpha=(p-1,1)$ or $\alpha=(2,1^{p-2})$ and $\Ind_K^{S_q}1\geq V_\beta$ for all $\beta\vdash q$ except possibly $\beta=(q-1,1)$ or $\beta=(2,1^{q-2})$.
        \item\label{lemma:92} At least one of $p$ and $q$ is odd, $\Ind_{H}^{S_p}1\geq V_\alpha$ for all $\alpha\vdash p$ except possibly $\alpha=(p-1,1)$ or $\alpha=(2,1^{p-2})$ and $\Ind_K^{S_q}1\geq V_\beta$ for all $\beta\vdash q$.
    \end{enumerate}
    Then $\Ind_{H\times K}^{S_{p+q}}1\geq V_\lambda$ for all $\lambda\vdash p+q$.
\end{lemma}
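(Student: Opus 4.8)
The plan is as follows. By transitivity of induction,
$\Ind_{H\times K}^{S_{p+q}}1=\Ind_{S_p\times S_q}^{S_{p+q}}\bigl(\Ind_H^{S_p}1\otimes\Ind_K^{S_q}1\bigr)$,
so by the Littlewood--Richardson rule it suffices to produce, for each $\lambda\vdash p+q$, an \emph{allowed pair} $(\alpha,\beta)$ --- meaning $\alpha\vdash p$ is not an exception for $H$ and $\beta\vdash q$ is not an exception for $K$ --- with $c^{\lambda}_{\alpha\beta}>0$. First I would record two elementary facts. \emph{(Pieri.)} If $\alpha$ is obtained from $\lambda$ by deleting a horizontal strip of $q$ boxes then $c^{\lambda}_{\alpha,(q)}=1$, and dually with a vertical strip and $(1^{q})$; moreover some $\alpha\vdash p$ arises this way whenever $\lambda_1\geq q$ (a horizontal strip of any size $\leq\lambda_1$ can be deleted), and likewise for vertical strips when $\lambda'_1\geq q$, and symmetrically with $p$ in place of $q$. \emph{(Saturation.)} Since the Frobenius characteristic of $\Ind_{S_p}^{S_{p+q}}V_\alpha$ is $s_\alpha$ times the $q$-th power of $s_{(1)}$, the multiplicity of $V_\lambda$ there is the number of standard tableaux of skew shape $\lambda/\alpha$; hence for a fixed $\alpha\vdash p$ there is a $\beta\vdash q$ with $c^{\lambda}_{\alpha\beta}>0$ if and only if $\alpha\subseteq\lambda$, and the same statement holds with $\alpha$ and $\beta$ interchanged. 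I would also note that $(p),(1^{p}),(q),(1^{q})$ are always allowed (here $p,q>3$ is used) and that the two exceptions on each side are conjugate: $(2,1^{p-2})=(p-1,1)'$ and $(2,1^{q-2})=(q-1,1)'$.

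Under hypothesis~\ref{lemma:92}, $K$ is unrestricted, so by saturation it is enough to exhibit an allowed $\alpha\vdash p$ with $\alpha\subseteq\lambda$. If $\lambda$ is a single row or column the only option is $(p)$ or $(1^{p})$, which is allowed; otherwise a short case analysis --- on whether $\lambda_1\geq p$, on the number of parts, treating two-row and two-column $\lambda$ directly --- always supplies $\alpha\vdash p$ with $\alpha\subseteq\lambda$ and $\alpha\notin\{(p-1,1),(2,1^{p-2})\}$. The only delicate point is $p=4$: there $(p-2,1,1)=(2,1^{p-2})$, but then $\lambda$ has at least four rows and one uses $(1^{4})\subseteq\lambda$.

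Under hypothesis~\ref{lemma:91} both factors are restricted and $p,q$ are odd. The hypothesis is symmetric in $(p,H)$ and $(q,K)$, and replacing $\lambda$ by $\lambda'$ preserves allowedness (the exceptions are conjugation-closed), so I would assume $q\leq p$. If $\lambda$ fits inside a $(q-1)\times(q-1)$ square and $q<p$, then $p-1>q-1$ forces that neither $(p-1,1)$ nor $(2,1^{p-2})$ is contained in $\lambda$; hence every $\alpha\vdash p$ contained in $\lambda$ is allowed, and choosing an allowed $\beta\vdash q$ with $\beta\subseteq\lambda$ (possible by the argument of the previous paragraph applied to $q$, using $q>3$) and invoking saturation with $\alpha,\beta$ exchanged produces an $\alpha$ with $c^{\lambda}_{\alpha\beta}>0$, which is then automatically allowed. (When $q=p$ one argues similarly but must work a little harder, since such $\lambda$ can contain both exceptions.) If $\lambda$ does not fit inside that square, say $\lambda_1\geq q$ (the case $\lambda'_1\geq q$ being conjugate), I would delete a horizontal $q$-strip and take $\beta=(q)$: the family of resulting $\alpha\vdash p$ meets an allowed shape unless it is contained in $\{(p-1,1),(2,1^{p-2})\}$, and a direct inspection shows this pins $\lambda$ down to a restricted list of degenerate shapes (few rows, or near-hooks with $\lambda_1\leq q+2$), which are then dispatched one at a time using the explicit Littlewood--Richardson rule for partitions with at most two or three rows and their conjugates.

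The hard part is exactly this residual list of degenerate $\lambda$ under~\ref{lemma:91}, where every Pieri move lands on an exceptional $\alpha$ or $\beta$ and one must read an allowed pair off the Littlewood--Richardson coefficients, using $p,q>3$ and the parity of $p,q$. The tightest instance is $\lambda=(p-1,p-1)$ with $q=p-2$: here $c^{\lambda}_{\alpha\beta}>0$ forces $\alpha=(p-1-k,\,k+1)$ and $\beta=(p-2-k,\,k)$ for $0\leq k\leq(p-3)/2$, with $\alpha$ an exception only when $k=0$ and $\beta$ only when $k=1$; since $q=p-2>3$ forces $p\geq 7$, the choice $k=2$ is available and gives an allowed pair. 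This is precisely why $q=3$ (equivalently, in the application, $\mu=(5,3)$) must be excluded.
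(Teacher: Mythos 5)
Your proposal follows the same route as the paper: reduce via transitivity of induction and the Littlewood--Richardson rule to exhibiting, for each $\lambda\vdash p+q$, a non-exceptional pair $(\alpha,\beta)$ with $c^{\lambda}_{\alpha\beta}>0$; the paper then outsources precisely this combinatorial step to the proof of Lemma~9 of~\cite{elementwise}, which is the part you sketch. Your outline is sound and correctly isolates the extremal case $\lambda=(p-1,p-1)$ with $q=p-2$ (where only $k=2$ survives, explaining why $(5,3)$ must be excluded), though the residual case analysis under the first hypothesis --- notably the $p=q$ subcase and the ``degenerate shapes'' you propose to dispatch one at a time --- is left at the level of a sketch, much as the paper itself leaves it to the cited reference.
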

\begin{proof}
    We have
    \begin{displaymath}
        \Ind_{H\times K}^{S_{p+q}}1 = \Ind_{S_p\times S_q}^{S_{p+q}}\Ind_H^{S_p}1\otimes \Ind_K^{S_q}1\geq \Ind_{S_p\times S_q}^{S_{p+q}}V_\alpha\otimes V_\beta.
    \end{displaymath}
    By the Littlewood-Richardson rule, is suffices to find a semistandard Young tableau of shape $\lambda-\alpha$ and weight $\beta$ whose reverse reading word is a lattice permutation for some $\alpha$ and $\beta$ as in the statement of the lemma.
    We refer the reader to the proof of~\cite[Lemma~9]{elementwise} for the construction of such a tableau.
\end{proof}
\begin{remark}\label{remark:swanson}
    Applications of Lemma~\ref{lemma:9} will use the fact that when $H=C_n$, $\Ind_H^{S_n}1\geq V_\alpha$ for all $\alpha\vdash n$ except possibly $\alpha=(n-1,1)$ or $\alpha=(n,1^{p-2})$ (Theorem~\ref{theorem:swanson}). 
\end{remark}
\begin{lemma}\label{lemma:p-p}
    For every odd positive integer $p\neq3$, permutations of cycle type $(p,p)$ form a global conjugacy class in $A_{2p}$.
    Permutations with cycle type $(3,3)$ do not form a global conjugacy class in $A_6$.
\end{lemma}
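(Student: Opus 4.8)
The plan is to deduce the first statement from Lemma~\ref{lemma:9} combined with the transfer principle already used in the proof of Theorem~\ref{lemma:global-conjugacy-class}, and to settle the second by a short character computation. The case $p=1$ is vacuous ($A_2$ is trivial), so assume $p\ge 5$. Write $w_\mu=(1\,2\,\cdots\,p)(p{+}1\,\cdots\,2p)$ for $\mu=(p,p)$. The centralizer of $w_\mu$ in $S_{2p}$ is $C_p\wr S_2$; the element $\tau=(1\ p{+}1)(2\ p{+}2)\cdots(p\ 2p)$ that interchanges the two $p$-cycles is a product of $p$ transpositions, hence odd because $p$ is odd, and so is every element of the non-trivial coset. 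Therefore $Z_{A_{2p}}(w_\mu)=Z_{S_{2p}}(w_\mu)\cap A_{2p}=\cyc{(1\,2\,\cdots\,p)}\times\cyc{(p{+}1\,\cdots\,2p)}\cong C_p\times C_p$, which lies inside $A_p\times A_p\subseteq A_{2p}$. Moreover, since $(p,p)$ has a repeated part, permutations of cycle type $(p,p)$ form a single conjugacy class in $A_{2p}$ (Section~\ref{sec:alternating}).

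By Theorem~\ref{theorem:swanson}, in the form recorded in Remark~\ref{remark:swanson}, $\Ind_{C_p}^{S_p}1\ge V_\alpha$ for every $\alpha\vdash p$ except possibly $\alpha=(p-1,1)$ or $(2,1^{p-2})$. Since $p>3$ is odd, Lemma~\ref{lemma:9} (in the case where both $p$ and $q$ are odd) applies with $H=K=C_p$ and gives $\Ind_{C_p\times C_p}^{S_{2p}}1\ge V_\lambda$ for every $\lambda\vdash 2p$. I then transfer this to $A_{2p}$. For a non-self-conjugate $\lambda$, transitivity of restriction and Frobenius reciprocity in $S_{2p}$ give
\begin{align*}
  \langle \Ind_{C_p\times C_p}^{A_{2p}}1,\ \Res^{S_{2p}}_{A_{2p}}V_\lambda\rangle_{A_{2p}}
  &= \langle 1,\ \Res^{S_{2p}}_{C_p\times C_p}V_\lambda\rangle\\
  &= \langle \Ind_{C_p\times C_p}^{S_{2p}}1,\ V_\lambda\rangle_{S_{2p}} > 0.
\end{align*}
For a self-conjugate $\lambda$ the same chain shows that the multiplicity of $V_\lambda^+$ plus that of $V_\lambda^-$ in $\Ind_{C_p\times C_p}^{A_{2p}}1$ is positive; to see that each is individually positive, note that because $(p,p)$ is a single conjugacy class in $A_{2p}$, the subgroup $w(C_p\times C_p)w^{-1}=Z_{A_{2p}}(ww_\mu w^{-1})$ is $A_{2p}$-conjugate to $C_p\times C_p$ for any $w\in S_{2p}\setminus A_{2p}$, so the character of $\Ind_{C_p\times C_p}^{A_{2p}}1$ is invariant under conjugation by $S_{2p}$; since conjugation by such a $w$ interchanges $V_\lambda^+$ and $V_\lambda^-$ (Theorem~\ref{theorem:frobenius}), the two multiplicities coincide and hence are both positive. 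This proves that permutations of cycle type $(p,p)$ form a global conjugacy class in $A_{2p}$.

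For the second statement I would take $w=(1\,2\,3)(4\,5\,6)$; as above $Z_{A_6}(w)\cong C_3\times C_3$ has order $9$, so $\Ind_{C_3\times C_3}^{A_6}1$ has dimension $40$. I claim the two $8$-dimensional irreducibles $V_{(3,2,1)}^{\pm}$ of $A_6$ are absent from it. The nine elements of $C_3\times C_3$ are the identity, four of cycle type $(3,1^3)$, and four of cycle type $(3,3)$; by the Murnaghan--Nakayama rule $\chi_{(3,2,1)}(3,1^3)=\chi_{(3,2,1)}(3,3)=-2$, and since neither of these classes is the class of cycle type $(5,1)$, Theorem~\ref{theorem:plusminus} gives $\chi_{(3,2,1)}^{\pm}=\tfrac12\chi_{(3,2,1)}$ on all of them. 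Hence
\[
  \langle \Ind_{C_3\times C_3}^{A_6}1,\ V_{(3,2,1)}^{\pm}\rangle = \tfrac19\bigl(8 + 4\cdot(-1) + 4\cdot(-1)\bigr) = 0,
\]
so $(3,3)$ is not a global conjugacy class in $A_6$. A dimension count $1+5+9+2\cdot 10+5=40$ against the irreducibles of $A_6$ confirms the bookkeeping.

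The main obstacle I anticipate is the $S_{2p}\to A_{2p}$ transfer: one must be confident that Lemma~\ref{lemma:9} really produces every $V_\lambda$ of $S_{2p}$ (making the non-self-conjugate case automatic) and that the self-conjugate case is governed by the conjugation-invariance of the permutation character, which is exactly where the input ``$(p,p)$ is a single $A_{2p}$-class'' — i.e.\ $p$ odd — is used, and where the excluded value $p=3$ is precisely the case in which Lemma~\ref{lemma:9} fails to apply and, as the computation shows, its conclusion genuinely fails. The remaining work is routine: Frobenius reciprocity, the structure of centralizers of permutations, and a one-line Murnaghan--Nakayama computation for $A_6$.
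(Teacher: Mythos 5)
Your proof is correct and follows the same backbone as the paper's: identify $Z_{A_{2p}}(w_{(p,p)})\cong C_p\times C_p$, feed Remark~\ref{remark:swanson} into Lemma~\ref{lemma:9} to get every $V_\lambda$ in $\Ind_{C_p\times C_p}^{S_{2p}}1$, and then transfer to $A_{2p}$. The differences are in the details of the transfer and in the exceptional cases. For self-conjugate $\lambda$ the paper argues that the character of $\Ind_{C_p\times C_p}^{A_{2p}}1$ is supported on classes conjugate to elements of $C_p\times C_p$, none of which has cycle type with distinct odd parts, so its inner product with $\chi_\lambda^+-\chi_\lambda^-$ vanishes by Theorem~\ref{theorem:plusminus}; you instead observe that the permutation character is invariant under conjugation by $S_{2p}\setminus A_{2p}$ (because $(p,p)$ is a single $A_{2p}$-class, or more directly because the odd block-swap $\tau$ normalizes $C_p\times C_p$), which swaps $V_\lambda^+$ and $V_\lambda^-$. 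Both arguments are sound and give the same equality of multiplicities. You also apply Lemma~\ref{lemma:9} uniformly for all $p\geq 5$, which its hypothesis $p,q>3$ permits, whereas the paper checks $p=5$ by direct computation and only invokes the lemma for $p>5$; and you make explicit the $(3,3)$ computation (absence of $V_{(3,2,1)}^{\pm}$ from the $40$-dimensional induced representation) that the paper leaves as an unstated direct calculation. Your Murnaghan--Nakayama values and the dimension bookkeeping $1+5+9+2\cdot 10+5=40$ check out.
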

\begin{proof}
    For $p=5$, the result can be verified by direct calculation.
    Assume $p>5$.
    The centralizer of $w_{(p,p)}$ in $A_{2p}$ is isomorphic to $C_p\times C_p$.
    By Theorem~\ref{theorem:swanson}, we can take $H=K=C_p$ in Lemma~\ref{lemma:91} to get $\Ind_{C_p\times C_p}^{S_{2p}}1\geq V_\lambda$ for all $\lambda\vdash 2p$.
    Also, since $C_p\times C_p\subset A_{2p}$, the sign representation $V_{(1^{p+q})}$ also occurs in $\Ind_{C_p\times C_p}^{S_{2p}}1$.
    Thus, $\Ind_{C_p\times C_p}^{S_{2p}}1\geq V_\lambda$ for all $\lambda\vdash 2p$.

    It follows that $\Ind_{C_p\times C_p}^{A_{2p}}1\geq V_\lambda$ for all $\lambda\vdash 2p$ that are not self-conjugate.
    The support of the character of $\Ind_{C_p\times C_p}^{A_{2p}}1$ only contains permutations that are conjugate to an element of $C_p\times C_p$.
    Hence, it does not contain any permutations with cycle type having distinct odd parts.
    Therefore, the multiplicities of $V_\lambda^+$ and $V_\lambda^-$ in $\Ind_{C_p\times C_p}^{A_{2p}}1$ are equal and positive.
\end{proof}
For partitions $\lambda$ and $\nu$, let $\lambda\cup\nu$ denote the partition obtained by concatenating the parts of $\lambda$ and $\nu$ and rearranging in weakly decreasing order.
\begin{lemma}\label{lemma:union}
    Suppose $\lambda$ and $\nu$ are partitions with odd parts such that permutations with cycle type $\lambda$ and $\nu$ lie in global conjugacy classes.
    If $\lambda\cup\nu$ is a partition where no part appears more than two times, then every permutation with cycle type $\lambda\cup\nu$ lies in a global conjugacy class in $A_{|\lambda\cup\nu|}$.
\end{lemma}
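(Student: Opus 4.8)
The plan is to reduce the statement, through Clifford theory and the virtual character $\delta_\mu=\chi_\mu^+-\chi_\mu^-$ (for self-conjugate $\mu$), to a problem about symmetric groups that can be solved with the Littlewood--Richardson rule, in the spirit of Lemma~\ref{lemma:9} and of the proof of Theorem~\ref{theorem:C}. Write $\rho=\lambda\cup\nu$, $p=|\lambda|$, $q=|\nu|$, $n=p+q$, and realise $w_\rho=w_\lambda w_\nu$ with $w_\lambda$, $w_\nu$ supported on complementary sets of $p$ and $q$ points; since $\rho$ has only odd parts $w_\rho\in A_n$, and we must show that $\Ind_{Z_{A_n}(w_\rho)}^{A_n}1$ contains every irreducible representation of $A_n$. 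If $\rho$ has all parts distinct, it is a partition with distinct odd parts and at least two parts, and Lemma~\ref{lemma:global-conjugacy-class} finishes the proof once one checks that the hypotheses exclude $\rho\in\{(3,1),(5,3)\}$ and $\rho=(n)$: each of these would force $\lambda$ or $\nu$ to have a cycle type that is not global in its alternating group --- an $n$-cycle misses $V_{(n-1,1)}$ in $A_n$ by Theorem~\ref{theorem:cycle}, cycle type $(5,3)$ misses $V_{(4,4)}$ in $A_8$ by Theorem~\ref{theorem:C}, and $(3,1)$, $(3)$ miss the linear characters of $A_4$, $A_3$. So I would assume $\rho$ has a repeated part. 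Then the $S_n$-class of $w_\rho$ is a single $A_n$-class, so $\Ind_{Z_{A_n}(w_\rho)}^{A_n}1=\Res^{S_n}_{A_n}\Ind_{Z_{S_n}(w_\rho)}^{S_n}1$, and a short computation using Frobenius reciprocity and $\Ind_{A_n}^{S_n}\Res^{S_n}_{A_n}V_\mu=V_\mu\oplus V_{\mu'}$ shows the conclusion is equivalent to the assertion $(\star)$: for every $\mu\vdash n$, at least one of the $S_n$-representations $V_\mu$, $V_{\mu'}$ occurs in $\Ind_{Z_{S_n}(w_\rho)}^{S_n}1$. For self-conjugate $\mu$ this already yields ``both $V_\mu^+$ and $V_\mu^-$ occur'': $Z_{S_n}(w_\rho)$ is a product $\prod_a(C_a\wr S_{m_a})$ with some $m_a=2$, hence contains no element whose cycle type has distinct parts, so $\delta_\mu$ --- supported only on the class of cycle type $\phi^{-1}(\mu)$ --- is orthogonal to $\Ind_{Z_{A_n}(w_\rho)}^{A_n}1$, forcing $V_\mu^+$, $V_\mu^-$ to occur with equal multiplicity.

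To prove $(\star)$ I would first pass the hypothesis down one level. From $\Ind_{Z_{A_p}(w_\lambda)}^{S_p}1=\Ind_{A_p}^{S_p}\Ind_{Z_{A_p}(w_\lambda)}^{A_p}1$, together with $\Ind_{A_p}^{S_p}V_\gamma^+=V_\gamma$ when $\gamma=\gamma'$ and $\Ind_{A_p}^{S_p}(\Res^{S_p}_{A_p}V_\gamma)=V_\gamma\oplus V_{\gamma'}$ otherwise, globality of the cycle type $\lambda$ in $A_p$ gives $\Ind_{Z_{A_p}(w_\lambda)}^{S_p}1\geq V_\gamma$ for every $\gamma\vdash p$, and similarly for $\nu$. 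Now suppose first that $\lambda$ and $\nu$ have disjoint sets of parts and that at most one of them has a repeated part; then $Z_{S_n}(w_\rho)=Z_{S_p}(w_\lambda)\times Z_{S_q}(w_\nu)$ with, say, $Z_{S_p}(w_\lambda)=Z_{A_p}(w_\lambda)$, so $\Ind_{Z_{S_p}(w_\lambda)}^{S_p}1\geq V_\gamma$ for all $\gamma\vdash p$, while $\Ind_{Z_{S_q}(w_\nu)}^{S_q}1$ contains, for each $\delta\vdash q$, at least one of $V_\delta$, $V_{\delta'}$. Given $\mu\vdash n$, choose $\gamma\vdash p$ and $\delta\vdash q$ with $V_\mu$ occurring in $\Ind_{S_p\times S_q}^{S_n}(V_\gamma\otimes V_\delta)$ (such $\gamma,\delta$ exist for any $\mu$); after possibly replacing the triple $(\mu,\gamma,\delta)$ by $(\mu',\gamma',\delta')$, which preserves the Littlewood--Richardson multiplicity, we may assume $\Ind_{Z_{S_q}(w_\nu)}^{S_q}1\geq V_\delta$, and then $\Ind_{Z_{S_n}(w_\rho)}^{S_n}1\geq\Ind_{S_p\times S_q}^{S_n}(V_\gamma\otimes V_\delta)\geq V_\mu$, giving $(\star)$.

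The remaining case --- both $\lambda$ and $\nu$ have a repeated part (and, after regrouping $\rho$ so that each doubled common part is handled via Lemma~\ref{lemma:p-p}, also the case where $\lambda$ and $\nu$ share parts) --- is where the main difficulty lies: now $Z_{A_n}(w_\rho)$ properly contains $Z_{A_p}(w_\lambda)\times Z_{A_q}(w_\nu)$, on each factor only one member of $\{V_\gamma,V_{\gamma'}\}$ is controlled, and the two conjugate choices need not be compatible, so the argument above fails for the ``mixed'' pairs $(\gamma,\delta')$ and $(\gamma',\delta)$. I would resolve this by using $A_p$-globality of $\lambda$ to rule out the small cycle types and then verifying, for each remaining $\lambda$, that $\Ind_{Z_{S_p}(w_\lambda)}^{S_p}1$ actually contains $V_\gamma$ for all but a short, explicit list of near-extremal $\gamma$ --- so that both $V_\gamma$ and $V_{\gamma'}$ are available off that list --- and then running the Littlewood--Richardson tableau construction of Lemma~\ref{lemma:9} and~\cite[Lemma~9]{elementwise} to produce, for every $\mu$, an admissible pair $(\gamma,\delta)$ exhibiting $V_\mu$ inside $\Ind_{S_p\times S_q}^{S_n}(V_\gamma\otimes V_\delta)$; the finitely many small triples $(\mu,\lambda,\nu)$ left uncovered would be checked directly with SageMath. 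Together with the reductions above this establishes $(\star)$, and hence Lemma~\ref{lemma:union}.
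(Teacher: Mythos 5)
Your reduction to the symmetric-group statement $(\star)$ is correct and nicely executed: when $\lambda\cup\nu$ has a repeated part the $S_n$-class does not split, so $\Ind_{Z_{A_n}(w_\rho)}^{A_n}1=\Res^{S_n}_{A_n}\Ind_{Z_{S_n}(w_\rho)}^{S_n}1$; the Frobenius-reciprocity computation showing that containment of $\Res^{S_n}_{A_n}V_\mu$ downstairs is equivalent to containment of $V_\mu$ or $V_{\mu'}$ upstairs is right; and the orthogonality of $\delta_\mu$ to the induced character (because the centralizer contains no element whose cycle type has distinct odd parts) correctly settles the self-conjugate case. Your case (a) (disjoint sets of parts, at most one of $\lambda,\nu$ with a repeated part) is complete. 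For comparison, the paper's proof is a one-liner in a different direction: it asserts that $Z_{A_n}(w_{\lambda\cup\nu})=Z_{A_p}(w_\lambda)\times Z_{A_q}(w_\nu)$, induces in stages, and applies Frobenius reciprocity to any constituent $U\otimes W$ of the restriction of $V$ to $A_p\times A_q$. That factorization holds precisely when at most one part of $\lambda\cup\nu$ is doubled (a swap of two disjoint odd cycles is an odd permutation, so a single such swap never lands in $A_n$); when two parts are doubled, the product of two such swaps is even and centralizes $w_{\lambda\cup\nu}$ without lying in $Z_{A_p}(w_\lambda)\times Z_{A_q}(w_\nu)$, so the $A_n$-centralizer is strictly larger and the induced representation strictly smaller. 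So the difficulty you isolate in case (b) is genuine and is not dissolved by the paper's centralizer claim either.

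The genuine gap is that case (b) is asserted, not proved. It is an infinite family of configurations --- for instance $\lambda=(p,p)$, $\nu=(q,q)$ for distinct odd $p,q>3$, or $\lambda=(\mu_1,\mu_5)$, $\nu=(\mu_2,\mu_3,\mu_4)$ sharing two parts, which is exactly how the lemma is invoked for five-part and six-or-more-part $\mu$ --- so it cannot be closed by checking "finitely many small triples" in SageMath. Your plan hinges on the unsubstantiated claim that $\Ind_{Z_{S_p}(w_\lambda)}^{S_p}1$ contains $V_\gamma$ for all but a short explicit list of near-extremal $\gamma$ when $\lambda$ has a repeated part; by Theorem~\ref{theorem:sundaram} such $\lambda$ are not global in $S_p$, and you give no control over which $\gamma$ are missing from, say, $\Ind_{C_p\wr S_2}^{S_{2p}}1$ (a plethysm-type question that Theorem~\ref{theorem:swanson} does not answer), nor do you verify that the tableau construction of Lemma~\ref{lemma:9} tolerates whatever that list turns out to be. As it stands, your argument establishes the lemma only when at most one part of $\lambda\cup\nu$ is repeated and $\lambda,\nu$ share no parts; the remaining case needs an actual argument, not a plan.
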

\begin{proof}
    Suppose that $\lambda\vdash l$, $\nu\vdash m$ and $n=l+m$.
    The hypotheses on $\lambda$ and $\nu$ imply that the centralizer $Z_{\lambda\cup\nu}$ of a permutation with cycle type $\lambda\cup\nu$ in $A_n$ is $Z_\lambda\times Z_\nu\subset A_l\times A_m\subset A_n$.
    Therefore,
    \begin{displaymath}
        \Ind_{Z_{\lambda\cup\nu}}^{A_n}1 = \Ind_{A_l\times A_m}^{A_n}\Ind_{Z_\lambda}^{A_l}1\otimes \Ind_{Z_\nu}^{A_m}1.
    \end{displaymath}
    If $V$ is any irreducible representation of $A_n$, let $U\otimes W$ be some irreducible representation of $A_l\times A_m$ that occurs in the restriction of $V$ to $A_l\times A_m$.
    By Theorem~\ref{lemma:global-conjugacy-class}, $\Ind_{Z_\lambda}^{A_l}1\geq U$ and $\Ind_{Z_\nu}^{A_m}1\geq W$, so $\Ind_{Z_{\lambda\cup\nu}}^{A_n}1\geq \Ind_{A_l\times A_m}^{A_n}U\otimes W\geq V$ (by Frobenius reciprocity).
\end{proof}
\begin{theorem}
    Let $\mu$ be any partition with at least two parts, all of whose parts are odd, and no part appears more than two times.
    Then the permutations with cycle type $\mu$ form a global conjugacy class in $A_n$ if and only if $\mu$ is different from $(3,1)$, $(3,3)$, $(5,3)$, and $(3,3,1,1)$.
\end{theorem}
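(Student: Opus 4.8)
The plan is to establish both implications. For the \emph{only if} direction I would rule out each of the four cycle types separately. For $(3,1)$ the class in $A_4$ is that of a $3$-cycle, whose centralizer is $\cyc{w_{(3,1)}}$; comparing $\Ind_{\cyc{w_{(3,1)}}}^{A_4}1$ with the character table of $A_4$ shows that it omits the two nontrivial linear characters — this is also visible from Theorem~\ref{theorem:C}, since $V^{\pm}_{(2,1)}$ are restrictions of exceptions for $S_4$. For $(5,3)$ the centralizer of $w_{(5,3)}$ in $A_8$ is again $\cyc{w_{(5,3)}}$, so the class module is $\Ind_{\cyc{w_{(5,3)}}}^{A_8}1$, which by Theorem~\ref{theorem:C} does not contain $V_{(4,4)}$. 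The case $(3,3)$ is Lemma~\ref{lemma:p-p}. For $(3,3,1,1)$ I would verify by direct computation (for instance with SageMath) that the permutation module $\Ind_{Z_{A_8}(w_{(3,3,1,1)})}^{A_8}1$ omits an irreducible constituent.

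For the \emph{if} direction, fix $\mu$ as in the statement with $\mu\notin\{(3,1),(3,3),(5,3),(3,3,1,1)\}$. If the parts of $\mu$ are all distinct, it lies in (two) global conjugacy classes by Lemma~\ref{lemma:global-conjugacy-class}, using $\mu\neq(3,1),(5,3)$. Otherwise $\mu$ has a part of multiplicity two or at least two fixed points; since all parts of $\mu$ are odd, $w_\mu$ is even but $Z_{S_n}(w_\mu)$ contains an odd permutation (an interchange of two equal cycles, or a transposition of two fixed points), so $A_nZ_{S_n}(w_\mu)=S_n$ and hence $\Ind_{Z_{A_n}(w_\mu)}^{A_n}1=\Res^{S_n}_{A_n}\Ind_{Z_{S_n}(w_\mu)}^{S_n}1$. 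The point of this identity is that $\Res^{S_n}_{A_n}\Ind_{Z_{S_n}(w_\mu)}^{S_n}1$ contains a non-self-conjugate $A_n$-irreducible $V_\lambda|_{A_n}$ exactly when $\Ind_{Z_{S_n}(w_\mu)}^{S_n}1$ contains $V_\lambda$ or $V_{\lambda'}$, while for self-conjugate $\lambda$ the key observation (as in the proof of Lemma~\ref{lemma:global-conjugacy-class}) is that no element of $Z_{S_n}(w_\mu)$ has a cycle type with distinct odd parts — a repeated or even cycle length is forced on every permutation commuting with $w_\mu$, using that no part of $\mu$ appears more than twice — so $\delta_\lambda$ is orthogonal to the induced character and $V_\lambda^+,V_\lambda^-$ occur with equal multiplicity $\dim V_\lambda^{Z_{S_n}(w_\mu)}$, both positive as soon as $V_\lambda$ occurs in $\Ind_{Z_{S_n}(w_\mu)}^{S_n}1$. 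Thus the whole \emph{if} direction reduces to the purely symmetric-group assertion that for every $\lambda\vdash n$ at least one of $V_\lambda,V_{\lambda'}$ occurs in $\Ind_{Z_{S_n}(w_\mu)}^{S_n}1$.

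To prove that assertion I would split $\mu$ into the sub-partition $\tau$ of its multiplicity-one values (which has distinct odd parts) and the rest. As long as $\tau$ has at least two parts and is not $(3,1)$ or $(5,3)$, one peels off a two-part piece of $\tau$ and applies Lemma~\ref{lemma:union} — this is legitimate because such a piece has distinct parts and shares no part with the remainder of $\mu$, so the centralizer genuinely factors. Iterating, and using Lemma~\ref{lemma:global-conjugacy-class} and Lemma~\ref{lemma:p-p} as base pieces together with the trivial pieces $(1)$ and $(1,1)$, reduces the problem to a finite list of indecomposable base cases with a repeated part, the principal ones being $(p,p)$ (Lemma~\ref{lemma:p-p}), $(p,p,q)$ with $p\neq q$ odd, and partitions all of whose values have multiplicity two. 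For the latter two families $Z_{S_n}(w_\mu)$ contains $\prod_j C_{\mu_j}$, and after collapsing fixed points one uses Lemma~\ref{lemma:9} iteratively (and Sundaram's Theorem~\ref{theorem:sundaram} for the distinct-part building blocks) to get $\Ind_{\prod_j C_{\mu_j}}^{S_n}1\geq V_\lambda$ for all $\lambda\vdash n$, then passes to the larger group $Z_{S_n}(w_\mu)$ by a short direct analysis showing that the only $S_n$-constituents that can drop out occur in conjugate pairs $\{V_\lambda,V_{\lambda'}\}$ one of which survives; any remaining small cases are checked by hand.

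The hard part is this last reduction: arranging the peeling so that $(3,1)$, $(3,3)$, $(5,3)$ are never forced to appear as a piece of a Lemma~\ref{lemma:union} decomposition, and producing the complete finite list of indecomposable base cases so that each can be settled — the delicate balance being exactly what makes $(3,1)$, $(3,3)$, $(5,3)$, and $(3,3,1,1)$ come out as the precise set of failures. One should also keep track that only the distinct-part $\mu$, already handled by Lemma~\ref{lemma:global-conjugacy-class}, split into two $A_n$-classes, which are interchanged by conjugation by an odd permutation, so treating one of them suffices.
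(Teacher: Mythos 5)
Your ``only if'' direction is essentially sound (modulo a typo: the exceptional pair for cycle type $(3,1)$ in $A_4$ is $V_{(2,2)}^\pm$, not $V_{(2,1)}^\pm$), and your reduction of the ``if'' direction is correct: when $\mu$ has a repeated part, $Z_{S_n}(w_\mu)$ contains an odd permutation, so by Mackey $\Ind_{Z_{A_n}(w_\mu)}^{A_n}1=\Res^{S_n}_{A_n}\Ind_{Z_{S_n}(w_\mu)}^{S_n}1$, and the support of the induced character contains no class with distinct odd parts, so the self-conjugate constituents split evenly. This is a clean uniform repackaging of the arguments that the paper runs separately inside Lemmas~\ref{lemma:global-conjugacy-class}, \ref{lemma:p-p} and \ref{lemma:union}, and it correctly reduces everything to the symmetric-group statement that for each $\lambda$ at least one of $V_\lambda$, $V_{\lambda'}$ occurs in $\Ind_{Z_{S_n}(w_\mu)}^{S_n}1$.

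The genuine gap is that this symmetric-group statement is never proved. Sundaram's Theorem~\ref{theorem:sundaram} gives nothing here, since a $\mu$ with a repeated part is \emph{not} global in $S_n$, so some $V_\lambda$ really does drop out of $\Ind_{Z_{S_n}(w_\mu)}^{S_n}1$ and one must show its conjugate survives. You propose to do this by peeling off pieces via Lemma~\ref{lemma:union}, reducing to a ``finite list of indecomposable base cases,'' handling those via Lemma~\ref{lemma:9} applied to $\prod_j C_{\mu_j}$, and then ``passing to the larger group $Z_{S_n}(w_\mu)$ by a short direct analysis'' --- but none of these three steps is carried out: the list of base cases is not produced, the peeling is not shown to avoid the forbidden pieces $(3,1)$, $(3,3)$, $(5,3)$ (and Lemma~\ref{lemma:9} excludes $p=3$ or $q=3$, so partitions built from $3$'s and $1$'s need separate treatment --- this is exactly where $(3,3,1,1)$ hides), and the passage from $\prod_j C_{\mu_j}$ to the full centralizer is asserted rather than argued. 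You acknowledge this yourself by calling it ``the hard part.'' That hard part is precisely the content of the paper's proof, which is an induction on the number of parts of $\mu$ with an explicit choice of decomposition in each of the cases of two, three, four, five, and at least six parts (using Lemmas~\ref{lemma:global-conjugacy-class}, \ref{lemma:9}, \ref{lemma:p-p}, \ref{lemma:union}), together with machine verification of the finitely many small cases with $\mu_1\le 5$. As written, your argument establishes a correct framework but not the theorem.
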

\begin{proof}
    We induct on the number of parts of $\mu$.

    If $\mu$ has exactly two parts, then the result follows from Lemmas~\ref{lemma:global-conjugacy-class} and~\ref{lemma:p-p}.
    
    If $\mu$ has exactly three parts, we consider the following cases
    \begin{enumerate}[1.]
        \item $\mu_1>3$, $(\mu_2,\mu_3)$ is global and $\mu_2+\mu_3>3$.
        Then Lemma~\ref{lemma:92} applies with either $p=\mu_1$ or $q=\mu_1$.
        \item $\mu_1>3$, $(\mu_2,\mu_3)$ is global but $\mu_2+\mu_3\leq 3$.
        In this case $\mu_2=\mu_3=1$.
        Apply Lemma~\ref{lemma:union} with $\lambda=(\mu_1,1)$ and $\nu=(1)$.
        \item $\mu_1>5$ and $(\mu_2,\mu_3)$ is not global.
        In this case $(\mu_2,\mu_3)$ is either $(3,1)$ or $(5,3)$.
        If $(\mu_2,\mu_3)=(3,1)$, then apply Lemma~\ref{lemma:union} with $\lambda=(\mu_1,3)$ and $\nu=(1)$.
        If $(\mu_2,\mu_3)=(5,3)$, then apply Lemma~\ref{lemma:92} with $p=5$ and $q=(\mu_1,3)$.
        \item In all remaining cases, $\mu_1\leq 5$, so that $\mu$ is a partition of an integer not exceeding $15$.
        In all these cases, the result can be verified by direct calculation using SageMath~\cite{sagemath}.
    \end{enumerate}
    If $\mu$ has exactly four parts and $\mu_1=3$, then $\mu=(3,3,1,1)$ which is seen to be an exception by direct calculation.
    If $\mu_1>3$, then apply Lemma~\ref{lemma:92} with $p=\mu_1$ using the fact that $(\mu_2,\mu_3,\mu_4)$ is global by the three-parts case.

    If $\mu$ has exactly five parts, apply Lemma~\ref{lemma:union} with $\lambda=(\mu_1,\mu_5)$ and $\nu=(\mu_2,\mu_3,\mu_4)$.
    Note that even if $\mu_1=5$, then $\mu_4=1$, so both $(\mu_1,\mu_5)$ is global.

    Finally, if $\mu=(\mu_1,\dotsc,\mu_k)$ has at least six parts, then we can apply Lemma~\ref{lemma:union} with $\lambda=(\mu_1,\mu_2,\mu_k)$ and $\nu=(\mu_3,\mu_4\dotsc,\mu_{k-1})$.
\end{proof}
\subsection*{Acknowledgements}
We thank R.~Balasubramanian, John Cullinan, Anup Dixit, Siddhi Pathak, Alexey Staroletov, and Sheila Sundaram for helpful discussions.
We thank T.~Geetha, Sanoli Gun, K.~N.~Raghavan, and Sankaran Viswanath for their encouragement and support.
\printbibliography
\end{document}